%%%=============================================================
%%% F. Fasso` and N. Sansonetto
%%% "Nearly-integrable almost-symplectic Hamiltonian systems"
%%% Version submitted to International Mathematics Research Notices
%%% Date of this version: January 4, 2016
%%%=============================================================

%%%=================PREMABLE====================================

\documentclass[11pt]{article}
\usepackage{amsthm, amsmath , amsfonts,  eucal } 

\tolerance=1200
\textwidth150mm
\textheight215mm
\oddsidemargin0mm
\evensidemargin0mm
\usepackage{graphics}
\setlength{\unitlength}{1mm}

\pagestyle{myheadings}
\makeatletter
\renewcommand{\@evenhead}{\it\thepage\hfill 
Fass\`o and Sasonetto }
\renewcommand{\@oddhead}{\it 
  Fass\`o and Sansonetto:
  Nearly-integrable almost-symplectic Hamiltonian systems 
\hfill \thepage}
\makeatother

\theoremstyle{plain}
\newtheorem{theorem}{\bf Theorem}
\newtheorem{proposition}[theorem]{\bf Proposition}
\newtheorem{lemma}[theorem]{\bf Lemma}
\newtheorem{definition}[theorem]{\bf Definition}

%%%---------------------Package_TITLESEC
\usepackage{titlesec}
\titleformat{\section}
  {\normalfont\large\bfseries}{\thesection}{0.5em}{}
\newcommand{\addperiod}[1]{#1.}
\titleformat{\subsection}[runin]
  {\normalfont\bfseries}{\thesubsection}{0.25em}{\addperiod}

%%%=================END_PREAMBLE====================================
\begin{document}

%%%=================MACROS====================================

%%%-----------------Lists
\newcommand\bList{
\begin{list}{}{\leftmargin2em\labelwidth1.2em\labelsep.5em\itemindent0em
\topsep0ex\itemsep-.8ex} }
\newcommand\eList{\end{list}}

%%%-----------------Displyed_equations_and_related

\newcommand{\beq}[1]{\begin{equation}\label{#1}}
\newcommand{\eeq}{\end{equation}}
\newcommand{\for}[1]{(\ref{#1})}
\newcommand{\ug}{\;=\;}

%%%-----------------Symbols
\renewcommand{\a}{\alpha}
\newcommand{\e}{\varepsilon}
\newcommand{\p}{\varphi}
\renewcommand{\P}{\Phi}
\newcommand{\s}{\sigma}
\renewcommand{\o}{\omega}
\renewcommand{\L}{\Lambda}

\newcommand{\cA}{\mathcal{A}}                 
\newcommand{\cB}{\mathcal{B}}
\newcommand{\cC}{\mathcal{C}}                 
\newcommand{\cF}{\mathcal{F}}                 
\newcommand{\cK}{\mathcal{K}}                 
\newcommand{\cL}{\mathcal{L}}                 
\newcommand{\cO}{\mathcal{O}}                 
\newcommand{\cS}{\mathcal{S}}                 

\newcommand{\Sp}[1]{ \mathrm{Sp}(#1)}

\newcommand{\toro}{\mathbb{T}}
\newcommand{\reali}{\mathbb{R}}
\newcommand{\razionali}{\mathbb{Q}}
\newcommand{\interi}{\mathbb{Z}}
\newcommand{\tenne}{\toro^n}
\newcommand{\renne}{\reali^n}

\renewcommand{\bar}{\overline}		
\newcommand{\der}[2]{\frac{\partial#1}{\partial#2}}
\newcommand{\dder}[3]{\frac{\partial^2#1}{\partial#2\partial#3}}
\newcommand{\const}{\mathrm{const}} 
\newcommand\rank{\mathrm{rank\,}}

\renewcommand\aa{\mathrm {aa}}

%%%=================TITLE_ETC====================================

\title{\bf \parskip=0pt
Nearly-integrable almost-symplectic
\\
Hamiltonian systems\footnote{
This work is part of the research projects {\it
Symmetries and integrability of nonholonomic
mechanical systems} of the University of Padova.}}

\author{
Francesco Fass\`o\footnote{
Universit\`a di Padova,
Dipartimento di Matematica,
Via Trieste 63, 35121 Padova, Italy.
{\tt (E-mail: fasso@math.unipd.it)}
}
\ and 
N. Sansonetto\footnote{
Universit\`a di Padova,
Dipartimento di Matematica,
Via Trieste 63, 35121 Padova, Italy.
{\tt (E-mail: nicola.sansonetto@gmail.com)} }
\ \footnote
{Supported by the Research Project
{\it Symmetries and integrability of nonholonomic
mechanical systems} of the University of Padova.}
}

\vskip 1truecm
\date{\small (\today)}

\maketitle

{\small
\begin{abstract}
\noindent
Integrable Hamiltonian systems on almost-symplectic manifolds have
recently drawn some attention. Under suitable properties, they have a
structure analogous to those of standard symplectic-Hamiltonian
completely integrable systems.  Here we study small Hamiltonian
perturbations of these systems.  Preliminarily, we investigate some
general properties of these systems. In particular, we show that if
the perturbation is `strongly Hamiltonian' (namely, its Hamiltonian
vector field is also a symmetry of the almost-Hamiltonian structure)
then the system reduces, under an almost-symplectic version of
symplectic reduction, to a family of nearly integrable  standard
symplectic-Hamiltonian  vector fields on a reduced phase space, of
codimension not less than 3. Therefore, we restrict our study to
non-strongly Hamiltonian perturbations. We will show that KAM
theorem on the survival of strongly nonresonant quasi-periodic tori
does non apply, but that a weak version of Nekhoroshev theorem on the
stability of actions is instead valid, even though for a time scale
which is polynomial (rather than exponential) in the inverse of the
perturbation parameter. 
\noindent
\vskip 1truecm
\end{abstract}
}

{\small
\noindent
{\it Keywords}: Almost-symplectic systems; strongly Hamiltonian
systems; Nekhoroshev theorem.  

\noindent{\it MSC (2010):} 53D15, 37J40, 70H08.
}

%%%============ARTICLE

\section{Introduction}
An almost-symplectic manifold is a generalization of a symplectic
manifold, in which the nondegenerate 2-form is not  closed.
Hamiltonian systems on almost-symplectic manifolds arise, for
instance, in nonholonomic mechanics \cite{bates-sn}; moreover, their
study might be an intermediate step to the study of the more general
case of (generalized) Hamiltonian systems on (twisted) Dirac manifolds
considered in \cite{BlvdS,SW,zung1}. The main difference between Hamiltonian
systems on symplectic and almost-symplectic manifolds is that in the
almost-symplectic case, due to the non-closedness of the 2-form,
Hamiltonian vector fields are not necessarily symmetries of the
almost-symplectic structure. Those which are symmetries of the
almost-symplectic structure have special properties, and resemble
more closely the Hamiltonian vector fields of the standard symplectic
case; they were called `strongly Hamiltonian' in~\cite{FS1}.

We are aware of only a few articles dedicated to the almost-symplectic
case. Our previous work \cite{FS1} focussed on the integrability of
Hamiltonian systems on almost-symplectic manifolds. Reference
\cite{vaisman}, by I. Vaisman, studies general properties of strongly
Hamiltonian systems on almost-symplectic manifolds and provides
examples. Some geometric aspects are studied in \cite{SS1,sjamaar}.

The question underlying the present work is how different is the
{dynamics} of an almost-symplectic Hamiltonian system from the
standard symplectic-Hamiltonian ones. This is a broad question, which
probably does not have a single, definite answer. For instance, in
the almost-symplectic case, non-strongly Hamiltonian systems need not
conserve the volume in phase space, while strongly Hamiltonian
systems do (see Section 2). In this paper, we begin this
investigation by considering a special case, that  of
almost-symplectic Hamiltonian systems that are small perturbations of
almost-symplectic integrable Hamiltonian systems. The question we ask
is whether the great theorems of Hamiltonian perturbation
theory---KAM \cite{AKN} and Nekhoroshev \cite{nek77,bengall,poeschel}
theorems---retain their validity in the almost-symplectic framework.
Not surprisingly, the answer depends crucially on whether the
perturbation is assumed to be Hamiltonian or strongly Hamiltonian. 

Preliminarily to this study, we need to further investigate some
properties of nearly integrable Hamiltonian systems on
almost-symplectic manifolds. We will do this in Sections 2 and 3.
The first question we investigate are the properties---and in a way
the very existence---of  strongly Hamiltonian perturbations. We will
investigate this question under certain hypotheses of genericity on
the almost-symplectic structure and on the perturbation and show
that, at least under such hypotheses, strongly Hamiltonian
nearly-integrable almost-symplectic Hamiltonian systems are in some
way not deeply different from standard symplectic-Hamiltonian
systems. Specifically, each of them reduce, under an analog of the
standard Meyer-Marsden-Weinstein symplectic reduction that was
studied in \cite{vaisman}, to a family of nearly integrable standard
symplectic-Hamiltonian systems. Versions of KAM and Nekhoroshev
theorem for strongly Hamiltonian perturbations could be easily
obtained, but have limited novelty.

More interestingly, we will see that if the perturbation is
Hamiltonian, but not strongly Hamiltonian, then KAM theorem does not
apply: quasi-periodic motions do not survive small perturbations.
However, a weaker version of Nekhoroshev theorem that gives {\it
stability of all motions for polynomial times} does hold.
Specifically, we will prove that, if the unperturbed system has the
standard  (quasi) convexity property of Nekhoroshev theory, then the
variations of the actions in all motions is bounded by quantities of
order $\e^{c_1}$, with a positive constant~$c_1$, over times
$\e^{-c_2}$, with a constant $c_2>1$,  where $\e$ is the size of the
perturbation. This time scale is much shorter than Nekhoroshev's
stability time scale for symplectic-Hamiltonian systems, which is
{exponential} in $1/\e^\const$.

The basic ideas of perturbation theory in the almost-symplectic
context will be explained in Section 4, in a form that should make
plausible (and self-evident for the reader expert in Hamiltonian
perturbation theory) our almost-symplectic version of Nekhoroshev
theorem. Since the proof of this result is almost identical to that
of the symplectic case (as it can be found, e.g., in \cite{poeschel})
we will not reproduce it here.

\section{Hamiltonian systems on almost-symplectic
manifolds}

\subsection{Hamiltonian and strongly Hamiltonian vector fields}
We consider a connected manifold $M$ of even dimension $2n$ equipped
with a nondegenerate 2-form~$\s$. $(M,\s)$ is a symplectic manifold
if $\s$ is closed. We will say that $(M,\s)$ is an {\it
almost-symplectic manifold} if $\s$ is not closed. The nondegeneracy
requires $n\ge 2$.

Following \cite{FS1}, we say that a vector field $X$ on an
almost-symplectic manifold $(M,\s)$ is 
\bList
\item[i.] {\it Hamiltonian} if $i_X\sigma$ is exact. Thus $i_X\sigma=
-df$ for some function $f\in C^{\infty}(M)$ that we call a
Hamiltonian of $X$, and we will write $X_f$ for $X$.
\item[ii.] {\it Strongly Hamiltonian} if it is
Hamiltonian and, moreover, it is a symmetry of $\sigma$, that is,
$$
   L_X\sigma = 0 \,,
$$
where $L$ denotes the Lie derivative. By Cartan's magic formula
$L_X\s=i_X(d\s)+d(i_X\s)$, the strong Hamiltonianity of a
Hamiltonian vector field is equivalent to
\begin{equation}
\label{ixdsigma}
   i_Xd\sigma= 0 \,.
\end{equation}
\eList
In both cases, we will call $n$ the number of degrees of freedom of
the system. Moreover, following partly \cite{vaisman}, we say that 
\bList
\item[iii.] A {\it strongly Hamiltonian function} is any function
on $M$ whose Hamiltonian vector field is strongly Hamiltonian.
We will denote by $S^\infty(M)$ the subset of $C^\infty(M)$ consisting
of strongly Hamiltonian functions.
\eList

\vspace{3ex}\noindent
{\it Remarks: } (i) Reference \cite{vaisman} considered only
the case of strongly Hamiltonian vector fields, and called
them Hamiltonian. We adhere here to the terminology that we
used in \cite{FS1} because we will consider both classes of
Hamiltonian and strongly Hamiltonian vector fields and need to
distinguish among them.

(ii) Most of the following could be generalized, in analogy with the
standard symplectic case, to `local' Hamiltonian and strongly
Hamiltonian vector fields, as done in \cite{vaisman}. We do not
consider such a greater generality because we will not need it in the
study of integrable systems and their perturbations.

\vspace{3ex}\noindent
At an algebraic level, the reason for considering strongly Hamiltonian
vector fields is the following. The almost-symplectic form $\sigma$
induces an almost-Poisson bracket on smooth functions of $M$, which
is defined by
\begin{equation}
\label{eq:aP}
   \{f,g\} := - \sigma(X_f,X_g) \qquad \forall\,f,g\in C^\infty(M) \,.
\end{equation}
Because of the nonclosedness of $\sigma$, this bracket does not
satisfy the Jacobi identity. Therefore, it does not make
$C^\infty(M)$ a Lie algebra and does not induce a
\hbox{(anti-)}homomorphism between functions and Hamiltonian vector
fields on $M$. However, all this holds true for the restriction of
the bracket to $S^\infty(M)$. This is a consequence of the following
Lemma (from \cite{FS1}, where its statement contains however an
obvious flaw):

\begin{lemma}
\label{l:homomorphism} 
Let $Y$ and $Z$ be two vector fields on $(M,\sigma)$. If $Y$
is Hamiltonian and $Z$ is a symmetry of $\s$ then $[Y,Z]$ is
Hamiltonian with Hamiltonian $-\sigma(Y,Z)$:
$$
    [Y,Z] = - X_{\sigma(Y,Z)} \,.
$$
\end{lemma}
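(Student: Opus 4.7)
The plan is to prove the statement by a single application of the commutator identity
\[
[L_Z, i_Y] \;=\; i_{[Z,Y]},
\]
which holds for the Lie derivative and interior product on arbitrary differential forms. Applying both sides to $\sigma$ and using the symmetry hypothesis $L_Z\sigma = 0$ immediately collapses the left-hand side to $L_Z(i_Y\sigma)$, giving $i_{[Z,Y]}\sigma = L_Z(i_Y\sigma)$.

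Next I would invoke the Hamiltonianity of $Y$, i.e.\ $i_Y\sigma = -df$ for some $f \in C^\infty(M)$, together with the fact that $L_Z$ commutes with $d$, to rewrite the right-hand side as $-d(Zf)$. This already shows that $i_{[Z,Y]}\sigma$---and hence $i_{[Y,Z]}\sigma = d(Zf)$ after flipping the sign of the bracket---is exact, so $[Y,Z]$ is Hamiltonian in the sense of item (i) above. To express its Hamiltonian intrinsically in terms of $\sigma$, $Y$ and $Z$ alone, eliminating the auxiliary primitive $f$, I would evaluate $df = -i_Y\sigma$ on $Z$: this yields $Zf = df(Z) = -\sigma(Y,Z)$. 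Substituting back and converting to Hamiltonian-vector-field notation via the convention $i_{X_h}\sigma = -dh$ then recovers the claimed formula $[Y,Z] = -X_{\sigma(Y,Z)}$.

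There is no real obstacle in the argument; the whole computation amounts to essentially one line once the commutator identity is in hand. The only point requiring attention is the careful bookkeeping of signs and of the order of the arguments in $\sigma$, which is also presumably the source of the flaw in the earlier version of the statement in \cite{FS1} noted just above the lemma.
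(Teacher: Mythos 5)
Your argument is correct and is essentially the paper's own proof: the paper's one-line computation $d(\sigma(Y,Z)) = d(i_Zi_Y\sigma) = L_Z(i_Y\sigma) - i_Z\,d(i_Y\sigma) = i_Y(L_Z\sigma)+i_{[Z,Y]}\sigma = -i_{[Y,Z]}\sigma$ rests on exactly your two ingredients---the commutator identity $[L_Z,i_Y]=i_{[Z,Y]}$ together with $L_Z\sigma=0$, and the closedness of $i_Y\sigma=-df$---merely assembled via Cartan's formula rather than via $L_Z(df)=d(Zf)$ and $df(Z)=Zf$. One remark on the sign you rightly single out as the delicate point: both your chain and the paper's terminate at $i_{[Y,Z]}\sigma=-d(\sigma(Y,Z))$, which under the stated convention $i_{X_h}\sigma=-dh$ reads $[Y,Z]=+X_{\sigma(Y,Z)}$ rather than $-X_{\sigma(Y,Z)}$; so your bookkeeping agrees with the paper's displayed computation, but the final conversion to $\mp X_{\sigma(Y,Z)}$ (in your write-up exactly as in the paper's own statement) does not match that identity and deserves the second look you yourself flagged.
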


\begin{proof} Since $d(i_Y\s)=0$ and $L_Z\s=0$,
$ d(\sigma(Y,Z)) = d(i_Zi_Y\s) =
L_Z(i_Y\sigma) - i_Z\, d(i_Y\sigma) = i_Y(L_Z\sigma)+i_{[Z,Y]}\sigma
= i_{[Z,Y]}\sigma = - i_{[Y,Z]}\sigma$.
\end{proof}

Applied to strongly Hamiltonian vector fields, Lemma
\ref{l:homomorphism} gives
$$
  [X_f,X_g] = - X_{\{f,g\}} \qquad \forall \, f,g\in S^\infty(M) \,.
$$
Since the Lie bracket of two symmetries of $\s$ is still a symmetry
of $\s$, this shows that the set of strongly Hamiltonian vector
fields is a Lie subalgebra of the algebra of vector fields on $M$.
Correspondingly, $S^\infty(M)$ is a Lie algebra when equipped with
the bracket (\ref{eq:aP}) and $f\mapsto X_f$ is an anti-homomorphism
between these two Lie algebras.

As pointed out in \cite{vaisman}, in view of (\ref{ixdsigma}) and of
Lemma \ref{l:homomorphism}, strongly Hamiltonian vector fields
form a distribution $\tilde\cS$ on $M$. This distribution is a
subdistribution of the distribution $\cK_{d\s}$ whose fiber is, at
each point, the kernel of $d\sigma$ at that point.\footnote{
We recall that the kernel of a $3$-form $\eta$ at a point $m\in M$ is
the kernel of the linear map $T_mM \to \Lambda^2(T_mM)$ given by
contraction with $\eta_m$, namely the map $v\mapsto i_v\eta_m$, $v\in
T_mM$. Here,  $\Lambda^2(T_mM)$ denotes the space of all covariant
antisymmetric 2-tensors on $T_mM$.}
In the present case, given that $d\s$ is closed,
$\cK_{d\s}$ is integrable and coincides with the so called
characteristic distribution of $d\s$ \cite{godbillon,LibMarle}. 
Some properties of the distribution $\tilde\cS$ have been studied in
\cite{vaisman}.

Reference \cite{vaisman} remarks that the class of strongly
Hamiltonian vector fields (which may be identified, modulo constants,
with the class $S^\infty(M)$ of strongly Hamiltonian functions) 
might be much smaller than that of Hamiltonian vector fields (which
may be identified, modulo constants, with $C^\infty(M)$), and that it
is not even apriori clear whether $S^\infty(M)$ contains any
non-constant function. In order to show that this is not the case,
reference \cite{vaisman} provided some examples. We add here a
simple, quantitative remark on this question, that we will use in the
sequel:

\begin{lemma}
\label{l:upperbound}
At any point at which $d\s$ is nonzero there exist at most $2n-3$
germs of functionally independent strongly Hamiltonian functions.

Equivalently: if $d\s(m)\not=0$ at some $m\in M$, there exists a
coordinate system in a neighbourhood $V$ of $m$ such that the
restriction to $V$ of any strongly Hamiltonian function does not
depend on three coordinates.
\end{lemma}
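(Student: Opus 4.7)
The whole lemma is reduced to a linear-algebra observation at $m$: any nonzero $k$-form on a vector space has kernel of codimension at least $k$, since otherwise it would descend to a nonzero $k$-form on a quotient of dimension less than $k$. In particular, the $3$-form $d\s_m\neq 0$ on $T_mM$ has kernel $\cK_{d\s}(m)$ of dimension at most $2n-3$.

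For the functional-independence bound, suppose $f_1,\ldots,f_k\in S^\infty(M)$ have functionally independent germs at $m$. Then $df_1(m),\ldots,df_k(m)$ are linearly independent covectors. By nondegeneracy of $\s$, the musical isomorphism $Y\mapsto -i_Y\s_m$ carries these to linearly independent Hamiltonian vector fields $X_{f_1}(m),\ldots,X_{f_k}(m)$. Strong Hamiltonianity gives $i_{X_{f_i}}d\s=0$, so each $X_{f_i}(m)\in\cK_{d\s}(m)$, and hence $k\leq\dim\cK_{d\s}(m)\leq 2n-3$.

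For the equivalent coordinate statement, let $\mathcal{E}$ denote the distribution whose fiber at each point consists of the tangent vectors $Y$ with $Y(f)=0$ for every $f\in S^\infty(M)$. A direct computation $df(Y)=-\s(X_f,Y)=\s(Y,X_f)$ shows that any $Y$ in the $\s$-orthogonal $\cK_{d\s}^{\perp_\s}$ annihilates every strongly Hamiltonian $f$, so $\mathcal{E}\supseteq\cK_{d\s}^{\perp_\s}$; and by nondegeneracy of $\s$, $\dim\cK_{d\s}^{\perp_\s}=\rank(d\s)\geq 3$ in a neighborhood of $m$ by lower semicontinuity. Moreover $\mathcal{E}$ is involutive: for $Y,Z\in\mathcal{E}$ and $f\in S^\infty(M)$, $[Y,Z](f)=Y(Z(f))-Z(Y(f))=0$. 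Shrinking to a neighborhood $V$ on which $\mathcal{E}$ has constant rank $s\geq 3$ and applying Frobenius' theorem, one obtains coordinates $(x_1,\ldots,x_{2n})$ in which $\mathcal{E}$ is spanned by the last $s$ coordinate vector fields. The last three coordinate directions $\partial_{x_{2n-2}},\partial_{x_{2n-1}},\partial_{x_{2n}}$ then lie in $\mathcal{E}$, so no strongly Hamiltonian function depends on $x_{2n-2},x_{2n-1},x_{2n}$ throughout $V$.

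The main subtlety is the constant-rank condition required by Frobenius: $\mathcal{E}$ may fail to have locally constant rank at a singular point of $d\s$. Since $\rank(d\s)$ is lower semicontinuous and $\geq 3$ near $m$, it is locally constant on an open dense subset, on which the argument applies directly; this is the sense in which the coordinate formulation should be interpreted. The essential input remains the linear-algebra bound on the kernel of a nonzero $3$-form, and the rest is standard Frobenius.
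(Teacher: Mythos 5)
Your proof of the first assertion is correct and follows the same strategy as the paper: strongly Hamiltonian vector fields take values in $\ker d\s$, whose codimension is at least $3$ wherever $d\s\neq0$, and the nondegeneracy of $\s$ transports linear independence of the differentials $df_i$ to linear independence of the vectors $X_{f_i}(m)$ inside that kernel. Your proof of the underlying linear-algebra fact is, however, different from and cleaner than the paper's: you pass to the quotient by the kernel and use that an alternating $k$-form on a space of dimension $<k$ vanishes, whereas the paper contracts with a vector $v$ such that $i_v\eta_m\neq0$, uses that a nonzero $2$-form has rank $\geq 2$, and then adjoins $v$ itself. Your argument is shorter and works verbatim for $k$-forms of any degree.

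For the coordinate formulation you take a genuinely different route: you introduce the annihilator distribution $\mathcal{E}$ of all strongly Hamiltonian differentials, bound it from below by $\cK_{d\s}^{\perp_\s}$ (which has dimension $\geq 3$ near $m$), check involutivity, and invoke Frobenius; the paper instead shrinks $V$ and completes a maximal family of functionally independent strongly Hamiltonian functions to a coordinate system. The constant-rank difficulty you flag is real: $\dim\mathcal{E}$ is only upper semicontinuous, so Frobenius need not apply at $m$ itself, and your fallback---restricting to the open dense set where the rank is locally constant---establishes a weaker statement than the lemma, which asserts the existence of such coordinates on a full neighbourhood of \emph{any} point with $d\s(m)\neq0$. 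Be aware, though, that the paper's own one-line derivation of the coordinate form has the same semicontinuity issue (a family of strongly Hamiltonian functions whose differentials are independent at $m$ need not span, at nearby points, the differentials of \emph{all} strongly Hamiltonian functions), so your proposal is no less complete than the published argument on this point; a fully airtight version would either impose a local constant-rank hypothesis on $\cK_{d\s}$ (or on $\mathcal{E}$) or be phrased on an open dense subset, as you do. Since only the first assertion, in the form of Lemma~\ref{l:kerC}, is what the paper actually uses later, this caveat does not affect the applications.
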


\begin{proof} We use the algebraic fact that {\it at a point at which
a 3-form is nonzero, the codimension of its kernel is $\ge 3$}. This
must of course be known, but since we could not find a reference we
provide a proof.  Let $\eta$ be a 3-form on $M$ and $\eta_m\not=0$ at
some $m\in M$. Then there exists a vector $v\in T_mM\setminus\{0\}$
such that the 2-form $i_v\eta_m\not=0$. Since any nonzero 2-form,
being antisymmetric, has positive even rank, this implies $\rank
i_v\eta_m\ge 2$. The conclusion now follows observing that $\ker
\eta_m\subset\ker i_v\eta_m$ given that the latter contains $v$ while
the former does not; hence $\dim(\ker i_v\eta_m\setminus
\ker\eta_m)\ge 1$ and the conclusion follows.

Assume now $d\s\not=0$ at a point $m$. Then, $d\s$ is everywhere
nonzero in any sufficiently small neighbourhood of $m$. If $V_0$ is
one such neighbourhood, then in $V_0$ the leaves of $\cK_{d\s}$ have
dimension $\le 2n-3$ and any set of  sections of $\cK_{d\s}$ that are
linearly independent at each point of $V_0$ has cardinality $\le
2n-3$. In turn, by the nondegeneracy of $\s$, there are at most
$2n-3$  strongly Hamiltonian functions which are defined in a
neighbourhood $V\subseteq V_0$ of $m$ and are everywhere functionally
independent. The statement in terms of coordinates follows by
restricting $V$ if necessary and completing a set of functionally
independent strongly Hamiltonian functions to a coordinate system.
\end{proof}

The upper bound of Lemma \ref{l:upperbound} is {\it de facto} met in
all examples in \cite{vaisman}.

Dynamically, strongly Hamiltonian vector fields have special
properties among the class of Hamiltonian vector fields.
For instance, Hamiltonian vector fields need not preserve the volume
$\s^n$. An example is the vector field $X=-\der{}{x_3}$ on
$M=\reali^4\setminus\{0\}$ with almost-symplectic form
$\s=x_3dx_1\wedge dx_2+dx_3\wedge dx_4$. Instead, since
$L_X\s^n=\s^{n-1}\wedge L_X\s$, we have the following

\begin{proposition}
\label{p:ConsVol} Every strongly Hamiltonian vector field on an
almost-symplectic manifold $(M,\s)$ preserves the volume $\s^n$.
\end{proposition}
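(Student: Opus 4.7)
The plan is essentially the one-line argument that the authors already anticipate in the sentence immediately preceding the statement. Since $\s$ is nondegenerate on the $2n$-manifold $M$, the top form $\s^n=\s\wedge\cdots\wedge\s$ is nowhere vanishing and is a volume form on $M$. Preservation of this volume along the flow of $X$ is equivalent to the vanishing of the Lie derivative $L_X\s^n$, so the whole statement reduces to showing $L_X\s^n=0$ whenever $X$ is strongly Hamiltonian.

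To obtain this I would invoke the Leibniz rule for the Lie derivative on a wedge product, $L_X(\alpha\wedge\beta)=(L_X\alpha)\wedge\beta+\alpha\wedge L_X\beta$, and iterate it (equivalently, proceed by a short induction on $n$) to get
$$
L_X\s^n \;=\; n\,\s^{n-1}\wedge L_X\s \,.
$$
By the very definition of strongly Hamiltonian (condition ii in the list above, i.e.\ $L_X\s=0$), the right-hand side vanishes, so $L_X\s^n=0$ and the flow of $X$ preserves $\s^n$. There is no real obstacle to overcome: once the Leibniz rule is applied, the conclusion is immediate from $L_X\s=0$, and the only thing worth being explicit about is that nondegeneracy of $\s$ is what makes $\s^n$ a genuine volume form.
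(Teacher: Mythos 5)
Your argument is correct and is exactly the one the paper uses: the identity $L_X\s^n=n\,\s^{n-1}\wedge L_X\s$ (stated in the paper without the inessential factor $n$) combined with the defining condition $L_X\s=0$. Your added remark that nondegeneracy of $\s$ is what makes $\s^n$ a genuine volume form is a sensible point of explicitness but does not change the substance.
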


\subsection{Strongly Hamiltonian completely integrable systems}
\label{ss:SHCIS}
As shown in \cite{FS1}, the well known notion of complete integrability
of the symplectic case and the resulting structure described by the
Liouville-Arnold theorem, are a particular case of a more general
situation, that holds in the almost-symplectic case. We begin
recalling the following almost-symplectic version of the
Liouville-Arnold theorem:

\begin{proposition}
\label{p:CI} {\rm \cite{FS1} }
Let $(M,\s)$ be an almost-symplectic manifold of dimension $2n$ and
$\pi = (\pi_1,\ldots,\pi_n) : M \to \reali^{n}$ a submersion with
compact and connected fibers whose components $\pi_1,\ldots,\pi_n$
are 
\bList
\item[$\bullet$] strongly Hamiltonian functions 
\item[$\bullet$] pairwise in involution with respect to the
almost-Poisson bracket \for{eq:aP}, namely
$\{\pi_i,\pi_j\}=0$ for $i,j=1,\ldots,n$.
\eList
Then the fibers of $\pi$ are diffeomorphic to $\tenne$ and each of
them has a neighbourhood $V$ equipped with coordinates $(a,\alpha):
V\to\cA\times\tenne$, with $\cA\subseteq\renne$, such that
$\pi=\pi(a)$ and the local representative $\s_\aa$ of $\s$ in these
coordinates has the form
\begin{equation}\label{eq:sigma}
  \s_\aa = \sum_{i=1}^n da_i\wedge d\alpha_i
  + \frac12 \sum_{i,j=1}^n A_{ij}(a) da_i\wedge da_j
\end{equation}
where $A$ is an $n\times n$ antisymmetric matrix that depends
smoothly on $a$.
\end{proposition}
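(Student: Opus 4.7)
The plan is to mimic the proof of the classical Arnold--Liouville theorem, tracking where and how the non-closedness of $\sigma$ intervenes. I would proceed in three phases.

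\emph{Phase 1 (toroidal fibers and local trivialization).} By Lemma~\ref{l:homomorphism} applied to the strongly Hamiltonian vector fields $X_{\pi_i}$, $[X_{\pi_i}, X_{\pi_j}] = -X_{\{\pi_i,\pi_j\}} = 0$, so these vector fields commute. They are tangent to the fibers of $\pi$ because $X_{\pi_j}(\pi_i) = \{\pi_i,\pi_j\} = 0$, linearly independent at each point by the nondegeneracy of $\sigma$ combined with that of $d\pi$, and complete on each fiber since the fibers are compact. The standard Arnold--Liouville topological argument then identifies each fiber with $\tenne$ and provides, in a tubular neighbourhood $V$ of any fiber, coordinates $(\pi,\alpha)$ with $\alpha$ angular such that $X_{\pi_i} = \sum_j B_{ij}(\pi)\,\partial/\partial\alpha_j$ for a smooth invertible matrix $B(\pi)$.

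\emph{Phase 2 (extracting the normal form).} In these coordinates expand
\[
\sigma = \tfrac{1}{2}\sum P_{ij}\,d\pi_i\wedge d\pi_j + \sum Q_{ij}\,d\pi_i\wedge d\alpha_j + \tfrac{1}{2}\sum R_{ij}\,d\alpha_i\wedge d\alpha_j
\]
with $P,R$ antisymmetric and all coefficients \emph{a priori} depending on $(\pi,\alpha)$. Three inputs pin them down. (a) The involution hypothesis gives $\sigma(X_{\pi_i},X_{\pi_j}) = 0$, i.e.\ isotropicity of the fibers; with invertibility of $B$ this forces $R\equiv 0$. (b) The identity $i_{X_{\pi_l}}\sigma = -d\pi_l$, together with invertibility of $B$, yields $Q = (B^T)^{-1}$, so $Q$ depends only on $\pi$. (c) Strong Hamiltonianity, $i_{X_{\pi_l}} d\sigma = 0$, after inverting $B$ reduces to the pointwise identity $\partial_{\alpha_m} P_{ij} = \partial_{\pi_j} Q_{im} - \partial_{\pi_i} Q_{jm}$. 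The right-hand side is $\alpha$-independent while $P$ is $\alpha$-periodic, so both sides must vanish separately. This gives $P = P(\pi)$ and, crucially, $\partial_{\pi_j} Q_{im} = \partial_{\pi_i} Q_{jm}$, i.e.\ the $1$-form $\sum_i Q_{im}(\pi)\,d\pi_i$ is closed on the base for each $m$.

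\emph{Phase 3 (action variables).} The closedness in (c) produces, on a possibly smaller simply connected neighbourhood, functions $a_m(\pi)$ with $\partial a_m/\partial\pi_i = Q_{im} = (B^{-1})_{mi}$, equivalently $X_{a_m} = \partial/\partial\alpha_m$. Substituting $a$ for $\pi$ as independent variables (keeping $\alpha$ unchanged) turns $Q$ into the identity and transports $P$ to a smooth antisymmetric function $A(a)$, yielding the claimed form~\eqref{eq:sigma}.

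The main obstacle is the last part of Phase~2: extracting from $i_{X_{\pi_l}}d\sigma = 0$, together with $\alpha$-periodicity of $P$, the closedness of the $1$-forms built from $Q$. That closedness substitutes, in the almost-symplectic setting, for the role played by closedness of $\sigma$ in the symplectic case, and it is precisely what allows the action variables of Phase~3 to exist.
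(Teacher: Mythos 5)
The paper itself gives no proof of Proposition~\ref{p:CI} (it is simply quoted from \cite{FS1}), so there is no internal proof to compare against; your argument is the natural Liouville--Arnold adaptation and, as far as I can check, it is correct and complete. In particular you correctly isolate the one genuinely almost-symplectic point: closedness of $\sigma$ is replaced by $i_{X_{\pi_l}}d\sigma=0$, which together with the $\alpha$-periodicity of $P$ forces $P=P(\pi)$ and the closedness of the $1$-forms $\sum_i Q_{im}\,d\pi_i$ whose potentials are the actions.
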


The hypothesis that $\pi_1,\ldots,\pi_n$ are strongly Hamiltonian
functions is essential for this result to hold. It ensures that the
Hamiltonian vector fields of these functions, besides being tangent
to the fibers of $\pi$ on account of the involutivity hypothesis
($L_{X_{\pi_i}}\pi_j=-\s(X_{\pi_i},X_{\pi_j})=-\{\pi_i,\pi_j\}=0$),
do pairwise commute ($[X_{\pi_i},X_{\pi_j}]=-X_{\{\pi_i,\pi_j\}}=0$)
and thus give the fibers of $\pi$ the structure of the
$n$-dimensional torus; for details and comments see \cite{FS1}.

The coordinates $(a,\alpha)$ will be called action-angle coordinates
relative to $\pi$. From (\ref{eq:sigma}) it follows that, in these
coordinates, the Hamiltonian vector field $X_f=\sum_{i=1}^n
(X_f^{a_i}\partial_{a_i} + X^{\alpha_i}_f\partial_{\alpha_i})$ of a
function $f(a,\alpha)$ has components
\begin{equation}\label{eq:HVF}
  X_f^a = - \der f {\alpha} \,,\qquad
  X_f^\alpha  = \der fa + A\der f \alpha 
\end{equation}
and that the almost-Poisson brackets (\ref{eq:aP}) have the expression
\begin{equation}\label{eq:APB}
 \{f,g\}_\aa = 
  \sum_{i=1}^n\Big( \der f{a_i} \der g{\alpha_i} - 
                    \der f{\alpha_i} \der g{a_i} \Big)
  + 
  \sum_{i,j=1}^n A_{ij}\der f{\alpha_i} \der g{\alpha_j} \,. 
\end{equation}
Moreover,
\begin{equation}\label{eq:dsigma}
  d\s_\aa \;=\; \frac12
  \sum_{i,j,k=1}^n \der{A_{ij}}{a_k} da_k\wedge da_i\wedge da_j
  \;=\; \sum_{i,j,k=1}^n C_{ijk} da_k\otimes da_i\otimes da_j 
\end{equation}
where
\begin{equation}\label{eq:C}
  C_{ijk}(a) = \der{A_{ij}}{a_k}(a) + \der{A_{ki}}{a_j}(a) 
  + \der{A_{jk}}{a_i}(a) \,. 
\end{equation}
The condition that $\s_\aa$ is not symplectic is precisely that 
the skew-symmetric 3-tensor field $C$ with components $C_{ijk}$ does
not vanish. 
 
As expression \for{eq:dsigma} shows,  $d\s_\aa$ is a basic 3-form
with respect to the bundle $\cA\times\tenne\to\cA$. Correspondingly,
we will regard the tensor field $C$ as defined on $\cA$. Thus, the
argument used in the proof of Lemma \ref{l:upperbound} implies that

\begin{lemma}\label{l:kerC} At a point $a\in\cA$ at which
$C(a)\not=0$,
$$
   \ker C(a) \,:=\, \{u\in\renne \,:\; C_{ijk}(a)u_k=0\}
$$
is a subspace of $\renne$ of dimension $\le n-3$.
\end{lemma}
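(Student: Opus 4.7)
The plan is to reduce the statement directly to the algebraic fact already isolated and proved inside Lemma \ref{l:upperbound}, namely that a nonzero $3$-form $\eta$ on a finite-dimensional vector space has $\dim\ker\eta$ at least $3$ less than the dimension of the space.

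First I would verify that the tensor $C$ of \for{eq:C} is totally antisymmetric in its three indices. This is an immediate computation from the definition: the skew-symmetry of $A$ gives $C_{jik}=-C_{ijk}$ and $C_{kji}=-C_{ijk}$, and the remaining permutations follow. Consequently, the assignment $a\mapsto\frac{1}{6}\sum_{i,j,k}C_{ijk}(a)\,da_i\wedge da_j\wedge da_k$ is a genuine $3$-form on $\cA$; in fact, comparing with \for{eq:dsigma}, it is precisely the representative on $\cA$ of the basic $3$-form $d\s_\aa$.

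Next, by total antisymmetry of $C$, the set
\[
\ker C(a) \;=\; \{u\in\renne : C_{ijk}(a)u_k=0 \text{ for all } i,j\}
\]
is insensitive to the choice of contracted index, so it coincides with the kernel $\{u\in\renne : i_u C(a)=0\}$ of $C(a)$ viewed as the $3$-form above. The hypothesis $C(a)\not=0$ means exactly that this $3$-form is nonzero at $a$. Applying the algebraic fact from the proof of Lemma \ref{l:upperbound} to this $3$-form on the $n$-dimensional vector space $\renne\simeq T_a\cA$ yields $\dim\ker C(a)\le n-3$.

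There is no real obstacle: the nontrivial ingredient (the codimension-$3$ bound for the kernel of a nonzero $3$-form) was already proven for Lemma \ref{l:upperbound}, and the only thing to check here is the bookkeeping above, which reinterprets the intrinsically defined kernel of $C(a)$ as the kernel of a $3$-form on a vector space of dimension $n$ (rather than $2n$, as in Lemma \ref{l:upperbound}).
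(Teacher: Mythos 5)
Your proof is correct and follows essentially the same route as the paper: the paper likewise regards $C$ as the (totally skew-symmetric) component tensor of the basic $3$-form $d\s_\aa$ on the $n$-dimensional base $\cA$ and then invokes the codimension-$\ge 3$ bound for the kernel of a nonzero $3$-form established in the proof of Lemma~\ref{l:upperbound}. Your explicit check of the total antisymmetry of $C$ and of the agreement between the two descriptions of the kernel just fills in bookkeeping that the paper leaves implicit.
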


As in the standard symplectic-Hamiltonian case, the action-angle
coordinates relative to a given fibration $\pi$ need not be defined
globally and are not unique. But exactly as in that case, any two
different sets $(a,\alpha)$ and $(\tilde a,\tilde \alpha)$ of
action-angle coordinates with overlapping domains are related to each
other by transformations of the form
\begin{equation}
\label{CambioAA}
  \tilde a=Za+z \,,\qquad \tilde \alpha=Z^{-T}\alpha + \cF(a)
\end{equation}
for some unimodular matrix $Z$ with integer entries, some $z\in\renne$
and some invertible map~$\cF$~\cite{FS1}.

\begin{definition}
\label{def:CI}
Given a submersion $\pi$ as in Proposition \ref{p:CI}, a function $h$
on $M$ is called {\rm completely integrable with respect to $\pi$} if
it is in involution with all functions $\pi_1,\ldots,\pi_n$:
$$
  \{h,\pi_i\} = 0 \,,\qquad i=1,\ldots,n \,.
$$
\end{definition}

Since the $\pi_i$'s depend only on the actions, the involutivity
conditions of Definition~\ref{def:CI} and expression (\ref{eq:APB}) 
imply that {\it a function $h$ is completely integrable with respect
to $\pi$ if and only if it is a function of the actions alone}.
Moreover, under such hypothesis, by (\ref{eq:HVF}), 
\begin{equation}\label{eq:CIHVF}
  X_h(a) = - \sum_{i=1}^n \der h {a_i} (a) \partial_{\alpha_i} 
\end{equation}
so that the flow of $h$ is linear on the tori $a=\mathrm{const}$.
Furthermore, {\it every completely integrable function is strongly
Hamiltonian}: if $h=h(a)$, then $i_{X_h}d\s_\aa=0$ because $d\s_\aa$
contains no differential of the angles.

From the point of view of complete integrability the case $n=2$ is
special, and has no interest: 

\begin{proposition} \label{p:n=2}
If on a 4-dimensional almost-symplectic manifold $(M,\s)$ there is a
submersion $\pi=(\pi_1,\pi_2)$ as in Proposition~\ref{p:CI}---and
hence a completely integrable Hamiltonian system---then $\s$ is
symplectic.
\end{proposition}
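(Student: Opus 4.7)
The plan is to apply Proposition \ref{p:CI} around every fiber of $\pi$ and observe that in dimension $n=2$ the normal form \for{eq:sigma} is automatically closed, for purely dimensional reasons.

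More precisely, I would proceed as follows. Fix an arbitrary fiber of $\pi$. By Proposition \ref{p:CI} it has a neighborhood $V$ admitting action-angle coordinates $(a,\alpha): V\to \cA\times\toro^2$ with $\cA\subseteq\reali^2$, in which $\s|_V = \s_\aa$ has the form \for{eq:sigma} with $A$ a $2\times 2$ antisymmetric matrix-valued function of $a\in\cA$. Then the expression \for{eq:dsigma} exhibits $d\s_\aa$ as a basic 3-form on the bundle $\cA\times\toro^2\to \cA$ whose pull-back 3-form on the base lives in $\Lambda^3 T^*\cA$. But $\dim\cA = n = 2$, and the space of alternating 3-tensors on a 2-dimensional vector space is trivial, so $d\s_\aa \equiv 0$ on $V$. (Equivalently, one may invoke Lemma \ref{l:kerC}: if $C(a)\neq 0$ at some $a$, then $\ker C(a)$ would have dimension $\leq n-3 = -1$, which is absurd; hence $C\equiv 0$ and $d\s_\aa = 0$ by \for{eq:dsigma}.)

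Since $\pi$ is a submersion with compact connected fibers, the neighborhoods $V$ furnished by Proposition \ref{p:CI} as the fiber varies cover all of $M$. Hence $d\s = 0$ on $M$, and $\s$ is symplectic.

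There is really no obstacle here: the statement is essentially an observation that the only place where $d\s$ can have nontrivial components (namely, the action directions, as \for{eq:dsigma} shows) has too small dimension to support a 3-form when $n=2$. The only small point to articulate is that the local flatness of $d\s$ obtained from Proposition \ref{p:CI} extends globally because the covering by action-angle neighborhoods is exhaustive.
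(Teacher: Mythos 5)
Your proof is correct and follows essentially the same route as the paper, whose entire argument is the one-line observation that every totally antisymmetric 3-tensor on a 2-dimensional space vanishes, so $C=0$ and $d\s_\aa=0$ in every action-angle chart. Your additional remarks (the covering of $M$ by action-angle neighbourhoods, and the alternative via Lemma \ref{l:kerC}) are accurate but just make explicit what the paper leaves implicit.
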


Indeed, every totally antisymmetric 3-tensor on a 2-dimensional space
is identically zero; hence $C=0$ and $d\s_\aa=0$ in the domain of any
system of action-angle coordinates. 

Therefore, from now on we will assume $n\ge3$. 

\vspace{3ex}\noindent
{\it Remarks. }
(i) Restricted to the domain of an action-angle chart, completely
integrable almost-symplectic systems are dynamically
indistinguishable from the completely integrable systems of the
standard symplectic case. Even more so, the restriction
\for{eq:CIHVF} of a completely integrable almost-symplectic vector
field to a domain of action-angle coordinates is  Hamiltonian with
respect to the symplectic structure $d\alpha_i\wedge d\alpha_i$
\cite{FS1}. (It is not known if things might be different globally,
that is, if there is an almost-symplectic manifold $(M,\s)$ with a
strongly Hamiltonian vector field that is not Hamiltonian with
respect to {\it any} symplectic form on $M$).

(ii) The conclusions of Lemma \ref{l:kerC} and Proposition \ref{p:n=2}
may be reached in more geometric terms. The map $\pi:M\to \pi(M)$ is a
fibration with fiber $\toro^n$. The transition functions
\for{CambioAA} among the local systems of action-angle coordinates
show that there is a symplectic form $\s_s$ on $M$ with local
representatives $da_i\wedge d\alpha_i$. In these charts the bundle
map $\pi:M\to \pi(M)$ is $(a,\alpha)\mapsto a$ and (\ref{eq:sigma})
shows that there is a 2-form $\mu$ on $\pi(M)$ such that $\sigma =
\sigma_s + \pi^*\mu$. Hence $d\sigma = \pi^* d\mu$. If $n=2$,
$d\mu=0$.

(iii) Reference \cite{FS1} considers a more general situation of that
described in Proposition \ref{p:CI}, which extends from the
symplectic to the almost-symplectic context not only the notion of
complete integrability, but also that of `noncommutative
integrability' or `superintegrability' (in which the invariant tori
may be isotropic, not just Lagrangian).

\section{Nearly-integrable almost-symplectic Hamiltonian systems}
\subsection{Hamiltonian perturbations} 

Our goal in this paper is to study small perturbations of an
almost-symplectic completely integrable system $(M,\s,h)$ with $n\ge
3$ degrees of freedom. 

Specifically, we aim to investigate the persistence of the invariant
tori of the unperturbed system and the existence of bounds on the
variations of the actions on finite but long time scales. Therefore,
our approach may be consistently done in a neighbourhood of an
invariant torus of the unperturbed system, that is, as we will say
`semi-globally'. In particular, we may consistently restrict the
analysis to the domain $\cA\times\tenne$ of a set of action-angle
coordinates $(a,\alpha)$, with $\cA$ connected.

Thus, from now on we will restrict our study to Hamiltonian systems
of the type
\begin{equation}
\label{eq:h+epsf}
  h(a) + \epsilon f(a,\a ) \,,\qquad (a,\a)\in\cA\times\tenne
\end{equation} 
where $\e$ is a small parameter, $h$ and $f$ are two functions, and
the almost-symplectic 2-form $\s_\aa$ on $\cA\times\tenne$ is as in
\for{eq:sigma}. As is typical in perturbation theory, we will work in
the real analytic category.

It is interesting to note that, independently of the smallness of the
parameter $\e$, the dynamics of these systems is subject to
constraints that appear to come from the almost-symplectic geometry
of the manifold:

\begin{proposition}
\label{p:volume-2}
Any Hamiltonian vector field on the almost-symplectic manifold
$(\cA\times\toro^n,\s_\aa)$, with $\cA\subseteq\reali^n$ and $\s_\aa$
as in \for{eq:sigma}, preserves the volume $\sigma^n$.
\end{proposition}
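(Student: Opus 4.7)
The plan is to compute $L_{X_f}\sigma^n$ directly using the explicit form \for{eq:sigma} of $\sigma_\aa$ and the fact that \for{eq:dsigma} shows $d\sigma_\aa$ involves only the $da_i$'s (no angle differentials). The proof then reduces to a type-counting argument on differential forms.

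First I would apply the Leibniz rule and Cartan's magic formula to write
$$
L_{X_f}\sigma^n \;=\; n\,\sigma^{n-1}\wedge L_{X_f}\sigma \;=\; n\,\sigma^{n-1}\wedge\bigl(i_{X_f}d\sigma+d(i_{X_f}\sigma)\bigr).
$$
Since $X_f$ is Hamiltonian, $i_{X_f}\sigma=-df$ is closed, so the second summand drops out and
$$
L_{X_f}\sigma^n \;=\; n\,\sigma^{n-1}\wedge i_{X_f}d\sigma_\aa.
$$

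Next I would use \for{eq:dsigma} to observe that $d\sigma_\aa$ is a linear combination of 3-forms of type $da_k\wedge da_i\wedge da_j$, hence contains no factor $d\alpha_i$. Contracting with $X_f$ lowers the degree by one but cannot introduce any $d\alpha_i$: indeed $i_{X_f}(da_k\wedge da_i\wedge da_j)$ is a 2-form whose only differentials are again among the $da_\ell$'s. Thus $i_{X_f}d\sigma_\aa$ is a 2-form in the $da$'s alone.

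Finally I would count angle differentials in the wedge product $\sigma_\aa^{n-1}\wedge i_{X_f}d\sigma_\aa$. From \for{eq:sigma}, each factor $\sigma_\aa$ contributes at most one $d\alpha_i$ (only the $da_i\wedge d\alpha_i$ summand carries an angle differential, while the $A_{ij}\,da_i\wedge da_j$ piece carries none). Therefore $\sigma_\aa^{n-1}$ contains at most $n-1$ angle differentials, and the wedge with the $d\alpha$-free form $i_{X_f}d\sigma_\aa$ still has at most $n-1$ of them. But any nonzero top form on $\cA\times\tenne$ must carry all $n$ of the $d\alpha_i$'s, so $\sigma_\aa^{n-1}\wedge i_{X_f}d\sigma_\aa=0$, yielding $L_{X_f}\sigma^n=0$. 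There is no serious obstacle here; the content is entirely the observation that the non-closedness of $\sigma_\aa$ lives in the $a$-directions, while the volume obstruction lives in the $\alpha$-directions.
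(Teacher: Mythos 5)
Your proposal is correct and follows essentially the same route as the paper: reduce to $L_{X_f}\sigma_\aa^n=\sigma_\aa^{n-1}\wedge i_{X_f}d\sigma_\aa$ using closedness of $i_{X_f}\sigma_\aa$, note that $i_{X_f}d\sigma_\aa$ involves only the $da_i$'s, and kill the top form by a degree count (you count at most $n-1$ angle differentials where the paper counts at least $n+1$ action differentials, which is the same observation). The only cosmetic difference is the harmless combinatorial factor $n$ in the Leibniz rule, which the paper omits.
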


\begin{proof} Assume $X_f$ is Hamiltonian. Then $L_{X} ({\s_\aa})^n =
({\s_\aa})^{n-1}\wedge L_{X} \s_\aa = ({\s_\aa})^{n-1}\wedge
i_{X}d\s$ because $i_{X}\s_\aa$ is closed. Since $\s_\aa=\sum_{i=1}^n
da_i\wedge d\alpha_i +\frac12 \sum_{i,j=1}^n A_{ij}da_i\wedge da_j$
and $i_{X_f} d\s_\aa =  \sum_{i,j,k=1}^n C_{ijk}\frac{\partial
f}{\partial \alpha_k}\ da_i\wedge da_j$, $({\s_\aa})^{n-1}\wedge
i_{X_f}d\s_\aa$ is a sum of terms each of which contains the wedge
product of at least $n+1$ differentials of the $n$ actions, and
therefore vanishes.
\end{proof}

This seems to imply that Hamiltonian vector fields on an
almost-symplectic manifold that hosts a completely integrable system
are, under certain aspects, special, not generic, among all
Hamiltonian vector fields on almost-symplectic manifolds. 

\subsection{Strongly Hamiltonian perturbations} 

Clearly, there are no obstructions whatsoever to the existence of
nearly-integrable Hamiltonian systems \for{eq:h+epsf}, because any
function $f(a,\alpha)$ gives one. 

As we now discuss, there are instead much stronger conditions on the
properties of strongly Hamiltonian nearly-integrable systems, to the
point that it is not even clear if there exist any such system which
either is not completely integrable or that does not reduce, in a
sense that will be made precise below, to a standard
nearly-integrable symplectic-Hamiltonian system.

We have not been able to investigate in full generality the structure
of all strongly Hamiltonian functions on almost-symplectic manifold
of the particular type  $(\cA\times\tenne,\s_\aa)$. We will do this
only under certain hypotheses of genericity on the strongly
Hamiltonian function $f$ and on the almost-symplectic 2-form
$\s_\aa$. 

Preliminarily to this analysis, we recall an almost-symplectic
version of the standard symplectic reduction procedure
\cite{LibMarle} studied in \cite{BlvdS}, who consider the more
general almost-Dirac case, and in \cite{vaisman}. Following
\cite{BlvdS} we will say that an  action $\P$ of a Lie group $G$ on
an almost-symplectic manifold $(M,\s)$ is {\it strongly Hamiltonian}
if its infinitesimal generators are strongly Hamiltonian vector
fields.\footnote{In the standard symplectic case, this term is
sometimes used with a different meaning (e.g. in~\cite{LibMarle}). }
Clearly, any strongly Hamiltonian action has a momentum map
$J:M\to\mathfrak g^*$, with $\mathfrak g$ the Lie algebra of $G$.
Such a momentum map is constant along the flow of any strongly
Hamiltonian system whose Hamiltonian is invariant under $\P$
\cite{BlvdS}. Denote now by $G_\mu$ the isotropy group of
$\mu\in\mathfrak g$ relative to the coadjoint action of $G$. 

\begin{proposition} 
\label{p:vaisman}
{\rm \cite{BlvdS,vaisman}} 
Consider a strongly Hamiltonian action $\P$ on an almost-symplectic
manifold $(M,\s)$ whose momentum map $J$ is equivariant with respect
to the action $\P$ on $M$ and to the coadjoint action on $\mathfrak
g^*$. Let $\mu\in\mathfrak g^*$ be a regular value of $J$ and assume
that the $\P$-action on $J^{-1}(\mu)$ is free and proper. Let $\pi:
J^{-1}(\mu) \to J^{-1}(\mu) / G_\mu$ be the canonical projection and
$i:J^{-1}(\mu) \hookrightarrow M$ the immersion. Then
\bList
\item[(i)] The smooth manifold 
$J^{-1}(\mu)/ G_\mu$ has an almost-symplectic (or symplectic)
structure $\bar\s_\mu$ such that $\pi^*\bar\s_\mu=i^*\s$. 
\item[(ii)] If $f$ is a strongly Hamiltonian function on $(M,\s)$,
then the function $\bar f$ such that $\pi^*\bar f=f$ is a strongly Hamiltonian
function on $(J^{-1}(\mu) /G_\mu, \bar \s_\mu)$.
\eList
\end{proposition}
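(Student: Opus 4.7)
The plan is to follow the Meyer-Marsden-Weinstein reduction almost verbatim, with the only substantive new ingredient being a check, for part (ii), that the condition $i_{X}d\s=0$ is preserved by the pullback/quotient procedure. I would therefore first handle the existence and nondegeneracy of $\bar\s_\mu$ using the standard pointwise arguments, and then verify strong Hamiltonianity of $\bar f$ by exploiting the intertwining identity $\pi^*d\bar\s_\mu=i^*d\s$ obtained by differentiating the defining relation $\pi^*\bar\s_\mu=i^*\s$.

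For part (i), the smoothness of $J^{-1}(\mu)$ and of the quotient $J^{-1}(\mu)/G_\mu$ are immediate from the hypotheses that $\mu$ is a regular value and that the action on the level set is free and proper. To define $\bar\s_\mu$ by $\pi^*\bar\s_\mu=i^*\s$ I would verify that $i^*\s$ is basic for $\pi$: the $G_\mu$-invariance is automatic, since the strong Hamiltonianity of the action makes each infinitesimal generator a symmetry of $\s$; horizontality $i_{\xi_M}(i^*\s)=0$ for $\xi\in\mathfrak g_\mu$ reduces to the identity $i_{\xi_M}\s=-d\langle J,\xi\rangle$ combined with the constancy of $\langle J,\xi\rangle$ on $J^{-1}(\mu)$. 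Nondegeneracy of $\bar\s_\mu$ would then follow by the usual pointwise argument, in which the equivariance of $J$ identifies the $\s$-orthogonal of $T_xJ^{-1}(\mu)$ with the $G$-orbit direction through $x$, and its intersection with $T_xJ^{-1}(\mu)$ with the $G_\mu$-orbit, i.e.\ with the $\pi$-vertical subspace.

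For part (ii), assuming as implicit in the statement that $f$ is $G$-invariant so that $\bar f$ with $\pi^*\bar f=i^*f$ exists, a Noether-type computation using $i_{\xi_M}\s=-d\langle J,\xi\rangle$ shows $X_f(\langle J,\xi\rangle)=0$ for every $\xi\in\mathfrak g$, so $X_f$ is tangent to $J^{-1}(\mu)$ and, being $G_\mu$-equivariant there, descends to a vector field $\bar X$ on the quotient. Pulling the relation $i_{\bar X}\bar\s_\mu$ back via $\pi$ and using $\pi^*\bar\s_\mu=i^*\s$ produces $i^*(i_{X_f}\s)=-i^*df=-\pi^*d\bar f$, whence the injectivity of $\pi^*$ on forms on the base gives $i_{\bar X}\bar\s_\mu=-d\bar f$, i.e.\ $\bar X=X_{\bar f}$. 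For the strong Hamiltonianity of $\bar f$ I would repeat the same scheme one differential order up: pulling $i_{X_{\bar f}}d\bar\s_\mu$ back via $\pi$ and using $\pi^*d\bar\s_\mu=i^*d\s$ gives $i^*(i_{X_f}d\s)=0$, the last equality because $f$ is strongly Hamiltonian on $M$; injectivity of $\pi^*$ once more yields $i_{X_{\bar f}}d\bar\s_\mu=0$.

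No step is individually hard; the point requiring the most care is the bookkeeping that keeps the two separate conditions encoding strong Hamiltonianity of $\bar f$ (namely $i_{X_{\bar f}}\bar\s_\mu=-d\bar f$ and $i_{X_{\bar f}}d\bar\s_\mu=0$) correctly aligned with the corresponding conditions on $f$ under the three operations $i^*$, $\pi^*$, and the projection of vector fields. The key one-line observation that makes this work is that all three operations intertwine cleanly because of $\pi^*\bar\s_\mu=i^*\s$ and its exterior derivative.
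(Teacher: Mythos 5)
Your proposal is correct, and for item (ii) it rests on the same key observation as the paper: every relevant condition intertwines through the single identity $\pi^*\bar\sigma_\mu=i^*\sigma$ and its exterior derivative. The difference is one of packaging and of completeness. The paper disposes of (i) by citing Vaisman and of (ii) by a one-line Lie-derivative computation, $0=L_{X_f}\sigma=L_{\pi^*X_{\bar f}}\pi^*\bar\sigma_\mu=\pi^*L_{X_{\bar f}}\bar\sigma_\mu$, hence $L_{X_{\bar f}}\bar\sigma_\mu=0$; by Cartan's formula this is equivalent to your separate verification of $i_{X_{\bar f}}\bar\sigma_\mu=-d\bar f$ and $i_{X_{\bar f}}d\bar\sigma_\mu=0$, so the two arguments are mathematically the same. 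What your version buys is explicitness at the points the paper leaves tacit: you supply the Marsden--Weinstein-type argument for (i) rather than outsourcing it, you record the ($G$-)invariance hypothesis on $f$ needed for $\bar f$ to exist at all, and you actually verify that the projected vector field $\bar X$ equals $X_{\bar f}$ (i.e.\ that $\bar f$ is a Hamiltonian of $\bar X$ for $\bar\sigma_\mu$), a step the paper's one-liner silently presupposes when it writes $X_{\bar f}$. The only caveat is notational: the tangency of $X_f$ to $J^{-1}(\mu)$ and its $\pi$-relatedness to $\bar X$ must be in place before any of the pullback identities such as $\pi^*(i_{\bar X}\bar\sigma_\mu)=i^*(i_{X_f}\sigma)$ make sense, and you do establish this via the Noether computation, so the argument is sound.
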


\begin{proof} The proof of item (i) is given in \cite{vaisman}. As for
item (ii), which is not noticed in \cite{vaisman}, $0=L_X\s =
L_{\pi^* X_{\bar f}}\pi^*\bar\s = \pi^* L_{X_{\bar f}}\bar\s$. Thus
$L_{X_{\bar f}}\bar\s=0$.
\end{proof}

\subsection{The Fourier spectrum of a strongly Hamiltonian function}
\label{ss:Fourier}
 The origin of the obstruction to the existence of strongly
Hamiltonian perturbations traces back to Lemma~\ref{l:upperbound},
according to which it is always possible to choose the coordinates,
at least locally, in such a way that a strongly Hamiltonian function
is independent of at least $3$ coordinates. Lemma \ref{l:upperbound}
does not guarantees that these coordinates may be chosen to be
action-angle coordinates, and that the strongly Hamiltonian function 
is independent of (at least) three angles, but we will show that this
happens under certain conditions, and that it has further
consequences.

In order to investigate this question we will resort to Fourier series
techniques. Any function $f:\cA\times \tenne\to\reali$ can be
expanded in the Fourier series
\beq{Fourier1}
  f(a,\alpha) = \sum_{\nu\in{\interi^n}} \hat f_\nu(a) 
  E_\nu(\a) 
\eeq
where $E_\nu(\a) = e^{\sqrt{-1}\,\nu\cdot\alpha}$. We call
``spectrum'' of $f$ at a point $a\in\cA$ the set
$$
  \mathrm{Sp}(f,a) \,:=\,
  \{ \nu\in\interi^n \,:\; \hat f_\nu(a)\not=0 \} \,.
$$
The following Lemma gives a link between the spectrum of a strongly
Hamiltonian perturbation and the kernel of the 3-tensor $C$ defined
in \for{eq:C}. 

\begin{lemma}
\label{l:shf}
Consider a strongly Hamiltonian function $f$ on the
almost-symplectic manifold $(\cA\times \tenne,\s_\aa)$, with 
$\cA\subseteq\renne$ and $\s_\aa$ as in \for{eq:sigma}. Then
$$
  \mathrm{Sp}(f,a) \subseteq \interi^n \cap \ker C(a) 
  \qquad \forall a\in\cA \,.
$$
\end{lemma}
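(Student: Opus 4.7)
The plan is to translate the strong Hamiltonian condition $i_{X_f}d\s_\aa=0$ into a pointwise algebraic condition on the Fourier modes of $f$, and then read off the inclusion from orthogonality of the characters $E_\nu$.

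First I would compute $i_{X_f}d\s_\aa$ in action--angle coordinates. Since by \for{eq:dsigma} the 3-form $d\s_\aa$ involves only the $da_k$, only the $a$-components of $X_f$ contribute. Contracting the wedge form $\frac12\sum \partial_{a_k}A_{ij}\,da_k\wedge da_i\wedge da_j$ with $X_f$, relabeling indices and using the antisymmetry of $A$, the three resulting terms combine into
\[
   i_{X_f}d\s_\aa \;=\; \frac12\sum_{i,j,k=1}^n C_{ijk}(a)\, X_f^{a_k}\, da_i\wedge da_j,
\]
with $C_{ijk}$ as in \for{eq:C}. This is the same computation implicit in the proof of Proposition~\ref{p:volume-2}. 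Invoking \for{eq:HVF} to substitute $X_f^{a_k}=-\partial f/\partial\alpha_k$, the strong Hamiltonianity condition \for{ixdsigma} becomes the pointwise system
\[
   \sum_{k=1}^n C_{ijk}(a)\,\der{f}{\alpha_k}(a,\alpha) \;=\; 0
   \qquad\forall\,i,j=1,\ldots,n,\ \forall(a,\alpha)\in\cA\times\tenne.
\]

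Next I would Fourier-expand as in \for{Fourier1}: since $\partial_{\alpha_k}f = \sum_{\nu}\sqrt{-1}\,\nu_k\,\hat f_\nu(a)\,E_\nu(\alpha)$, the above identity reads
\[
   \sum_{\nu\in\interi^n}\Bigl(\sum_{k=1}^n C_{ijk}(a)\,\nu_k\Bigr)\hat f_\nu(a)\,E_\nu(\alpha) \;=\; 0
\]
for all $i,j$ and all $(a,\alpha)$. Orthogonality of the $E_\nu$ forces, for every $\nu\in\interi^n$ and every $a\in\cA$,
\[
   \hat f_\nu(a)\sum_{k=1}^n C_{ijk}(a)\,\nu_k \;=\; 0\qquad\forall\,i,j.
\]
Hence either $\hat f_\nu(a)=0$ or $\nu\in\ker C(a)$ in the sense of Lemma~\ref{l:kerC}. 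By definition of $\mathrm{Sp}(f,a)$ this gives exactly $\mathrm{Sp}(f,a)\subseteq\interi^n\cap\ker C(a)$.

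There is no real obstacle in the proof: the only slightly delicate point is the bookkeeping in the contraction $i_{X_f}d\s_\aa$, where three permuted contributions have to be recombined via the antisymmetry of $A$ to produce the totally antisymmetric tensor $C_{ijk}$ of \for{eq:C}. Once that algebraic identity is in hand, the Fourier argument is immediate and is exactly what motivates the whole subsequent discussion relating $\mathrm{Sp}(f,a)$ to the (at most $(n-3)$-dimensional, by Lemma~\ref{l:kerC}) subspace $\ker C(a)$.
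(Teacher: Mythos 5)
Your proposal is correct and follows essentially the same route as the paper: translate $i_{X_f}d\s_\aa=0$ into the pointwise condition $\sum_k C_{ijk}(a)\,\partial f/\partial\alpha_k=0$ using \for{eq:dsigma} and \for{eq:HVF}, then Fourier-expand and use independence of the characters $E_\nu$ to conclude that $\hat f_\nu(a)\not=0$ forces $\nu\in\ker C(a)$. The only difference is cosmetic bookkeeping in how the contraction producing the totally antisymmetric tensor $C_{ijk}$ is written out.
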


\begin{proof} Since $i_{X_f}d\s_\aa=  \sum_{i,j,k=1}^n
C_{ijk}X_f^{a_k}da_i\otimes da_j$, the condition for $f$ to be
strongly Hamiltonian is
\begin{equation}
\label{Cderfa=0}
   \sum_{k=1}^n C_{ijk}(a)\frac{\partial f}{\partial\alpha_k}(a,\alpha) =0 
   \qquad
   \forall\  i,j=1,\ldots,n\,,\ a \in\cA_C 
   \,,\ \alpha\in \toro^n
\end{equation}
that is,
\begin{equation}\label{Cderfa=0.2}
   \frac{\partial f}{\partial\alpha}(a,\alpha) \in \ker C(a) 
   \qquad \forall a \in\cA
   \,,\ \alpha\in \toro^n \,.
\end{equation}
Expanding $f$ in Fourier series, conditions \for{Cderfa=0}
become
$$
 \sum_{\nu\in{\interi^n}} \sum_{k=1}^n 
  C_{ijk}(a)\nu_k \hat f_\nu(a) E_\nu(\a) =0 
 \qquad \forall i,j=1,\ldots,n\,,\; a\in\cA
  \,,\, \alpha\in\tenne
$$
that is,
$$
 \sum_{k=1}^n C_{ijk}(a)\nu_k \hat f_\nu(a) = 0 
 \qquad \forall i,j=1,\ldots,n\,,\; a\in\cA
$$
Thus, for each $\nu\in\interi^n$ and at each point $a\in\cA_C$, if
$\hat f_\nu(a)\not=0$ then $\sum_{k=1}^n C_{ijk}(a)\nu_k = 0$ for all
$i,j$, namely $\nu\in\ker C(a)$. \end{proof}

\subsection{Constraints on strongly Hamiltonian perturbations}
\label{ss:SHPerturbations}

For systems with 3 degrees of freedom, Lemma \ref{l:shf} has the
following immediate consequence:

\begin{proposition}
\label{p:n=3}
Let $f$ be a strongly Hamiltonian function on
$(\cA\times\tenne,\s_\aa)$ with $n=3$. Assume that the basic 3-form
$d\s_\aa$  is everywhere nonzero in an open and dense subset of
$\cA$. Then 
\bList
\item[i.] $f$ is independent of the angles $\a$.
\item[ii.] $X_f$ is Hamiltonian with respect to the
symplectic structure $\sum_{i=1}^3da_i\wedge d\a_i$ on
$\cA\times\toro^3$ (and, moreover, completely integrable
with respect to $\pi:(a,\a)\mapsto a$).
\eList
\end{proposition}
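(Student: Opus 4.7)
The plan is to combine Lemma \ref{l:shf} with Lemma \ref{l:kerC} specialized to $n=3$, and then to use density/continuity to propagate the conclusion to all of $\cA$.

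First, I would observe that by \for{eq:dsigma}, the 3-form $d\s_\aa$ vanishes at $a$ if and only if $C(a)=0$. So the hypothesis says that the open set $\cA_C:=\{a\in\cA:C(a)\neq 0\}$ is dense in $\cA$. For $n=3$, Lemma \ref{l:kerC} gives $\dim\ker C(a)\le n-3=0$ at every $a\in\cA_C$, so $\ker C(a)=\{0\}$ on $\cA_C$. Applying Lemma \ref{l:shf} to the strongly Hamiltonian function $f$, we deduce
\[
\mathrm{Sp}(f,a) \,\subseteq\, \interi^3\cap\ker C(a) \,=\, \{0\} \qquad \forall\,a\in\cA_C,
\]
which means that every Fourier coefficient $\hat f_\nu(a)$ with $\nu\neq 0$ vanishes on $\cA_C$.

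Next, since $f$ is smooth (in fact real analytic, as we work in the analytic category), each $\hat f_\nu$ is a continuous function of $a$; vanishing on the dense set $\cA_C$ forces $\hat f_\nu\equiv 0$ on all of $\cA$ for every $\nu\neq 0$. Therefore the Fourier expansion \for{Fourier1} reduces to $f(a,\alpha)=\hat f_0(a)$, proving part~(i).

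For part~(ii), since $f$ depends only on the actions, expression \for{eq:HVF} yields $X_f^a=0$ and $X_f^\alpha=\partial f/\partial a$ (the term $A\,\partial f/\partial\alpha$ drops out). This is exactly the Hamiltonian vector field of $f$ with respect to the standard symplectic form $\sum_i da_i\wedge d\alpha_i$ on $\cA\times\toro^3$. Complete integrability with respect to $\pi:(a,\alpha)\mapsto a$ is then immediate from the almost-Poisson bracket formula \for{eq:APB}: both $f$ and each $\pi_i$ depend only on $a$, so all three kinds of terms in \for{eq:APB} vanish and $\{f,\pi_i\}=0$. The main (and only nontrivial) step is the dimension count giving $\ker C(a)=\{0\}$ on $\cA_C$; once that is in hand, everything else is a direct consequence of the lemmas and formulas already established.
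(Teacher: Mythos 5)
Your proof is correct and follows essentially the same route as the paper: the paper applies Lemma \ref{l:kerC} to get $\ker C(a)=\{0\}$ on $\cA_C$ and then uses the pointwise condition \for{Cderfa=0.2} (i.e.\ $\partial f/\partial\alpha\in\ker C(a)$) plus continuity, whereas you phrase the same step through the Fourier spectrum of Lemma \ref{l:shf}; the two are equivalent since Lemma \ref{l:shf} is just the Fourier transcription of \for{Cderfa=0.2}. Your explicit verification of part (ii) via \for{eq:HVF} and \for{eq:APB} matches the remark the paper cites for that item.
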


\begin{proof}
i. Let $\cA_C$ be the subset of $\cA$ where $d\s_\aa$, and hence
$C$, are not zero. By Lemma \ref{l:kerC}, since $n=3$, the kernel of
$C$ is zero-dimensional at all points of $\cA_C$. Hence condition
\for{Cderfa=0.2} implies that $\der f\alpha(a,\alpha)=0$ for all
$a\in \cA_C$ and $\alpha\in \toro^n$. By continuity, $\der f\alpha=0$
in all of $\cA\times\toro^n$.

ii. This has already been noticed in Remark (i) at the end of
Section \ref{ss:SHCIS}. 
\end{proof}

The case with $n\ge 4$ is less clear and we will study it by
supplementing the hypothesis of the density of the non-zero set of
$d\s_\aa$ with conditions of genericity of the function $f$. We will
consider two such conditions.

First, we make an assumption on $f$ which is a well known condition
introduced by Poincar\'e in his study of the non-existence of first
integrals in nearly integrable Hamiltonian systems (\cite{poincare},
vol. 1, cap.~5; see also \cite{bfgg}). We say that a function
$f:\cA\times\tenne\to\reali$ is {\it Fourier-generic} in $\cA$ if for
any $\bar\nu\in\interi^n\setminus\{0\}$, either $\hat f_{\bar\nu}=0$
or, for any $a\in\cA$, there exists a $\nu\in \interi^n$ which is
`parallel' to $\bar\nu$ and is such that
$$
  f_{\nu}(a)\not=0 \,.
$$
By saying that two vectors $\nu$ and $\bar\nu$ of $\interi^n$ are
`parallel' we mean that $\nu=k\bar \nu$ for some
$k\in\razionali\setminus\{0\}$.

We note that the property of being Fourier-generic is independent of
the choice of action-angle coordinates: that is, if it is satisfied
by a function $f$, it is also satisfied by $\tilde f:=f\circ\cC^{-1}$
with $\cC:\cA\times\tenne\to\tilde\cA\times\tenne$ any change of 
action-angle coordinates, which has the form \for{CambioAA}. Indeed,
the Fourier components of the two functions $f$ and $\tilde f$ are
related by 
$$
    \hat{\tilde f}_\nu(Z a+z) = 
    e^{-\sqrt{-1}\, \nu\cdot\cF(a) } 
    \hat{f}_{Z^{-1}\nu}(a)
$$
(see \cite{fasso-hpt}) and the linear map $Z$ preserves the
`parallelism' of integer vectors.

\begin{lemma}
\label{l:n>3}
Consider a strongly Hamiltonian function $f$ on the
almost-symplectic manifold $(\cA\times \tenne,\s_\aa)$ with $n\ge4$.
Assume that $d\s_\aa$ is everywhere nonzero in an open and dense subset 
$\cA_C$ of $\cA$ and that $f$ is Fourier-generic in $\cA$.

Then there is a change of action-angle coordinates
$\cC: \cA\times\tenne \to \tilde \cA\times\tenne$,
$(a,\a)\mapsto(\tilde a,\tilde\a)$ of the type \for{CambioAA} such
that $f\circ\cC^{-1}$ depends on at most $n-3$ angles $\tilde \a$. 

\end{lemma}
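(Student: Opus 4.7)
The plan is to combine Lemma \ref{l:shf} with the Fourier-genericity assumption to show that the ``global spectrum'' of $f$ lies in a single rational subspace of $\renne$ of dimension at most $n-3$, and then to straighten this subspace into the first $k \le n-3$ coordinate directions by a unimodular action-angle transformation of type \for{CambioAA}.

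Define $\Lambda_f := \{\nu \in \interi^n : \hat f_\nu \not\equiv 0 \text{ on } \cA\}$; the first key step is to show that $\Lambda_f \subseteq \ker C(a)$ for every $a \in \cA$. Pick $\bar\nu \in \Lambda_f$; since $\hat f_{\bar\nu}$ is not identically zero, Fourier-genericity provides, for each $a \in \cA$, some $\nu = q\bar\nu \in \interi^n$ with $q\in\razionali\setminus\{0\}$ and $\hat f_\nu(a)\ne 0$. Then $\nu \in \mathrm{Sp}(f,a)$, so Lemma \ref{l:shf} gives $\nu\in\ker C(a)$, and since $\ker C(a)$ is a vector subspace we also have $\bar\nu = q^{-1}\nu \in \ker C(a)$. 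Now fix any $a^* \in \cA_C$ (such a point exists because $\cA_C$ is dense); by Lemma \ref{l:kerC}, $\ker C(a^*)$ has dimension at most $n-3$, so the real span $V\subseteq\renne$ of $\Lambda_f$ satisfies $k := \dim V \le n-3$.

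Since $V$ is spanned by integer vectors, $V\cap\interi^n$ is a primitive sublattice of $\interi^n$ of rank $k$. By the elementary divisor theorem there exists a $\interi$-basis $v_1,\dots,v_n$ of $\interi^n$ such that $v_1,\dots,v_k$ is a $\interi$-basis of $V\cap\interi^n$. Let $M$ be the unimodular integer matrix whose columns are $v_1,\dots,v_n$, put $Z := M^{-1}$, and consider the coordinate change $\cC:(a,\a)\mapsto(\tilde a,\tilde\a) = (Za, Z^{-T}\a)$, which is of the type \for{CambioAA} (with $z=0$ and $\cF\equiv 0$). By the Fourier transformation rule recalled just before the statement, $\hat{\tilde f}_\nu \not\equiv 0$ forces $Z^{-1}\nu \in \Lambda_f \subseteq V\cap\interi^n$; equivalently, $\nu \in M^{-1}(V\cap\interi^n)$, which by construction is the lattice spanned by $e_1,\dots,e_k$, i.e.\ $\interi^k\times\{0\}^{n-k}$. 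Thus $f\circ\cC^{-1}$ has Fourier modes supported on the first $k$ components and therefore depends on at most $k \le n-3$ of the angles $\tilde\a$.

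The main obstacle is the argument in the second paragraph: one must lift the pointwise constraint of Lemma \ref{l:shf} (which only concerns the $\nu$'s with $\hat f_\nu(a)\ne 0$ at a specific $a$) to a uniform constraint that applies to every element of the global support $\Lambda_f$ simultaneously. Fourier-genericity is precisely what bridges this gap, by forcing the pointwise information along the line $\razionali\bar\nu$ to propagate to $\bar\nu$ itself through the subspace property of $\ker C(a)$. Once this is in place, the straightening step is a standard piece of lattice linear algebra.
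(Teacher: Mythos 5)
Your proof is correct and follows essentially the same route as the paper's: Lemma \ref{l:shf} plus Fourier-genericity plus the fact that $\ker C(a)$ is a subspace force the entire spectrum into a single rational subspace of dimension $\le n-3$ (bounded via one point of $\cA_C$), after which the Elementary Divisor Theorem, applicable with trivial divisors because the lattice is the intersection of $\interi^n$ with a subspace, straightens it into the first $k$ coordinate directions. The only cosmetic difference is that you phrase the first step as a constraint on the global support $\Lambda_f$ valid at every $a\in\cA$, whereas the paper intersects the pointwise lattices $\cL_a$ over $\cA_C$ and then extends the (globally linear) coordinate change by continuity; the content is identical.
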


\begin{proof} By Lemma \ref{l:kerC}, at each point of $\cA_C$ the
kernel of $C$ has dimension $\le n-3$. Since $f$ is strongly
Hamiltonian, then by Lemma \ref{l:shf}
$$
  \mathrm{Sp}(f,a) \subseteq \cL_a  \qquad \forall a\in\cA_C \,,
$$
with
$$
  \cL_a \;=:\; \interi^n \cap \ker C(a) \,.
$$
Since at the points $a$ of $\cA_C$, $\ker C(a)$ is a subspace of
$\renne$ of dimension $\le n-3$, at each of these points the set
$\cL_a$ is a sublattice of $\interi^n$ of rank $r\le n-3$. 

We now observe that, under the hypotheses of item ii., if we define
$$
  \cL \;:=\; 
  \bigcap_{a\in\cA_C}\cL_a \ug
  \Big(\bigcap_{a\in\cA_C}\ker C(a) \Big) \bigcap \interi^n 
$$
then we have
$$
  \Sp{f,a} \subseteq   \cL \qquad \forall a \in \cA_C\,.
$$ 
Indeed, assume that $\bar\nu\in \Sp{f,\bar a}$ for some $\bar
a\in\cA_C$, so that $\bar\nu\in\cL_{\bar a}$. Since $f$ is
Fourier-generic in $\cA$, and hence in $\cA_C$, for any $a\in\cA_C$
there exists $\nu\in\interi^n$ parallel to $\bar\nu$ such that
$\nu\in\Sp{f,a}$ and hence $\nu\in\cL_a$. But $\cL_a$, being the
intersection of $\interi^n$ with a subspace of $\renne$, contains all
integer vectors parallel to $\nu$. Thus $\bar\nu\in\cL_a$. This
proves that $\bar\nu\in\cap_{a\in\cA_C}\cL_a$.

Since each $\ker C(a)$ with $a\in \cA_C$ is a subspace of $\renne$ of
dimension $\le n-3$, the intersection $\cap_{a\in\cA_C}\ker C(a)$ is
also a  subspace of $\renne$ of dimension $\le n-3$ and $\cL$ is a
sublattice of $\interi^n$ of rank $r\le n-3$. 

Now, a sublattice of $\interi^n$ of rank $r$ is the set of all the
linear combinations with integer coefficients of $r$ vectors
$u_1,\ldots,u_r\in \interi^n$, called a basis. Consider a basis 
$\{u_1,\ldots,u_r\}$ of $\cL$ and complete it to a basis
$\{u_1,\ldots,u_r,u_{r+1},\ldots,u_n\}$ of $\interi^n$. That this is
possible is guaranteed by the Elementary Divisor Theorem (see e.g.
\cite{lang}, Theorem 7.8) thanks to the fact that $\cL$ is not just a
generic sublattice of $\interi^n$, but it is the intersection of
$\interi^n$ with a subspace of $\renne$.

Specifically, the Elementary Divisors Theorem states that for any 
finitely generated submodule $\not=\{0\}$ (e.g., a lattice) of a free
abelian module over a principal ideal domain (e.g., $\interi^n$)
there exists a basis $\{u_1,\ldots,u_n\}$ of the latter, an integer
$1\le r\le n$ and integers $d_1,\ldots, d_r$ such that
$\{d_1u_1,\ldots,d_ru_r\}$ is a basis of the former. In our case, all
$d_i=1$ because the lattice $\cL$ is the intersection of
$\interi^n$ with a subspace of $\renne$.

Since $\{u_1,\ldots,u_r\}$ is a basis of $\interi^n$ there exists a
unimodular integer matrix $Z$ such that $Zu_i=e_i$, the $i$-th unit
vector, for all $i=1,\ldots,n$. The change of coordinates
$$
  \cC_C: \cA_C\times\tenne \to \tilde\cA_C\times \tenne \,,
  \qquad (a,\a)\mapsto (\tilde a,\tilde \a) = (Z^Ta, Z^{-1}\a) 
$$
produces a new set of action-angle coordinates with the property that
the spectrum of the representative $\tilde f$ of $f$ is contained in the
lattice generated by $e_1,\ldots,e_r$. Hence, $\tilde f$ depends only on
the first $r\le n-3$ angles. The change of action-angle coordinates
$\cC_C$ extend by linearity to a change of action-angle coordinates
$\cC$ which is defined in all of $\cA\times\tenne$ and, by
continuity, conjugates $f$ to a function that depends only on the
first $r$ angles.
\end{proof}

Lemma \ref{l:n>3} has the following consequence:

\begin{proposition}
\label{p:n>3}
Let $f$ be a strongly Hamiltonian function on
$(\cA\times\tenne,\s_\aa)$ with $n\ge4$. Under the hypotheses of
Lemma \ref{l:n>3} the system reduces, under a torus action, to a
family of (possibly nonintegrable) symplectic-Hamiltonian systems
with at most $n-3$ degrees of freedom.
\end{proposition}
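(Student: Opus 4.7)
The plan is to apply Lemma \ref{l:n>3} to produce cyclic angles for $f$, then divide them out via the almost-symplectic reduction of Proposition \ref{p:vaisman}, and finally verify that the reduced 2-form is closed. The last step is the genuine obstacle; the earlier ones are essentially bookkeeping.

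First I would use Lemma \ref{l:n>3} to pass to action-angle coordinates $(\tilde a,\tilde\a)$ via a change $\cC$ of type \for{CambioAA} in which $f$ is represented by a function $\tilde f(\tilde a,\tilde\a_1,\dots,\tilde\a_r)$ with $r\le n-3$. Since \for{CambioAA} is linear in both actions and angles, $\s_\aa$ keeps the form \for{eq:sigma} with a new antisymmetric matrix $\tilde A(\tilde a)$. The coordinate vector fields $\partial/\partial\tilde\a_j$, $j>r$, are then by \for{eq:HVF} the Hamiltonian vector fields of the actions $\tilde a_j$, hence strongly Hamiltonian since they depend only on actions. They commute and integrate to a free, proper, strongly Hamiltonian action of $\toro^{n-r}$ on $\cA\times\tenne$ by translation of the cyclic angles, with equivariant momentum map (equivariance being automatic for an abelian group) the projection $J=(\tilde a_{r+1},\dots,\tilde a_n)$, every value of which is regular. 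I would then invoke Proposition \ref{p:vaisman}: the reduced manifold $J^{-1}(\mu)/\toro^{n-r}$ is $\cA_\mu\times\toro^r$ with $\cA_\mu:=\{\tilde a:\tilde a_j=\mu_j,\,j>r\}$, the reduced form being
\begin{equation*}
  \bar\s_\mu \ug \sum_{i=1}^r d\tilde a_i\wedge d\tilde\a_i
  + \frac12\sum_{i,j=1}^r \tilde A_{ij}(\tilde a_1,\dots,\tilde a_r,\mu)\,d\tilde a_i\wedge d\tilde a_j,
\end{equation*}
and the invariant function $\tilde f$ descends to a strongly Hamiltonian function $\bar f_\mu$ on $(\cA_\mu\times\toro^r,\bar\s_\mu)$ by item (ii) of that proposition.

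The main point is to show $d\bar\s_\mu=0$. My plan is to track the lattice $\cL$ of the proof of Lemma \ref{l:n>3} through the change of coordinates. By its very definition $\cL\subseteq\bigcap_{a\in\cA_C}\ker C(a)$, and in the new coordinates it is generated by $e_1,\dots,e_r$; therefore $e_k\in\ker\tilde C(\tilde a)$ for every $k\le r$ and every $\tilde a$ in the open dense set $\tilde\cA_C$. By definition of the kernel this means $\tilde C_{ijk}(\tilde a)=0$ whenever $k\le r$, for all $i,j$, on $\tilde\cA_C$; outside $\tilde\cA_C$ all components of $\tilde C$ vanish. Total antisymmetry of $\tilde C$ then gives $\tilde C_{ijk}\equiv 0$ on $\tilde\cA$ as soon as any one of $i,j,k$ lies in $\{1,\dots,r\}$. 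Since formula \for{eq:dsigma} applied to $\bar\s_\mu$ involves only components $\tilde C_{ijk}$ with $i,j,k\le r$, I would conclude $d\bar\s_\mu=0$ identically. Thus each reduced system is a standard symplectic-Hamiltonian system on $\cA_\mu\times\toro^r$ with $r\le n-3$ degrees of freedom, and letting $\mu$ vary over the regular values of $J$ yields the claimed family.
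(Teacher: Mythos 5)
Your proposal is correct and follows the same overall route as the paper---pass to the coordinates of Lemma \ref{l:n>3} and quotient out the cyclic angles by the $\toro^{n-r}$-action via Proposition \ref{p:vaisman}---but it ends differently, and in fact more strongly. The paper's proof uses only the \emph{statement} of Lemma \ref{l:n>3} and therefore must allow for the reduced $2$-form to be non-closed: it distinguishes the cases $d\bar\s_\mu=0$, reduced dimension $3$ with $d\bar\s_\mu\not=0$ (handled by Proposition \ref{p:n=3}), and reduced dimension $\ge 4$ with $d\bar\s_\mu\not=0$, and in the last case it \emph{iterates} the reduction (which forces it to argue that Fourier-genericity and the density of the nonvanishing set of $d\s$ are inherited by the reduced data). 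You instead reach into the \emph{proof} of Lemma \ref{l:n>3} and use that the basis $u_1,\dots,u_r$ of $\cL=\bigl(\bigcap_{a\in\cA_C}\ker C(a)\bigr)\cap\interi^n$ lies in $\ker C(a)$ for every $a\in\cA_C$; since the coordinate change carries $\ker C(a)$ to $\ker\tilde C(\tilde a)$ and $u_k$ to $e_k$, you get $\tilde C_{ijk}=0$ for $k\le r$ on the dense set $\tilde\cA_C$, hence (by continuity) everywhere, and in particular for all $i,j,k\le r$, which is exactly what $d\bar\s_\mu$ is built from. This shows the reduction terminates in one step with the reduced form closed, which is a genuine sharpening: it makes the paper's case analysis and iteration unnecessary. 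Two small points: the phrase ``outside $\tilde\cA_C$ all components of $\tilde C$ vanish'' is only literally true if $\cA_C$ is taken to be the full nonvanishing set of $d\s_\aa$ (the hypothesis of Lemma \ref{l:n>3} only posits \emph{some} open dense set on which $d\s_\aa\not=0$); but the continuity-plus-density argument you already have covers this. And when checking that $\ker C$ transforms as you claim, one should note that the action and angle parts of \for{CambioAA} transform contragrediently, so the matrix sending the spectrum of $f$ onto $\langle e_1,\dots,e_r\rangle$ is the same one acting on the actions and hence on $\ker C$; this works out, but deserves the one line of verification.
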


\begin{proof}
Let $f(a,\a)$ be a strongly Hamiltonian function. By Lemma
\ref{l:n>3}, there is a choice of  action-angle coordinates such that
$f$ is independent of the last $r\ge 3$ angles. We denote by $(a,\a)
= (I,J,\varphi,\psi)\in \reali^{n-r}\times\reali^{r} \times
\toro^{n-r}\times\toro^{r}$ these coordinates, with $f$ independent
of the $r$ angles $\psi$. The system is thus invariant under the
$\toro^r$-action given by translations of the angles $\psi$. This is
a strongly Hamiltonian action, with equivariant momentum map given by
the actions $J:M\to \reali^r$. Fix a value of $J\in\reali^r$. Then
the reduced phase space is $\bar\cA_J\times \toro^{n-r} \ni (I,\p)$ 
with $\bar \cA_J \subseteq \reali^{n-r}$ and, if we write $A = 
\left(\begin{matrix} \bar A & B \cr -B^T & D \cr \end{matrix}\right)$
with the block $\bar A$ of dimension $(n-r)\times(n-r)$ etc., the 
reduced almost-symplectic 2-form is
$$
   \bar{\s_\aa}_J  = 
   \sum_{i=1}^{n-r} dI_i\wedge d\varphi_i 
   +
   \frac12 \sum_{i,j=1}^{n-r} (\bar A_J)_{ij} dI_i\wedge dI_j 
$$
with $\bar A_J(I):=\bar A(I,J)$. The reduced Hamiltonian is  $\bar
f_J(I,\p):=f(I,J,\p)$. 

There are now three possibilities. If $\bar{\s_\aa}_J$ is symplectic
(what happens, in particular, if $n-r=0,1,2$), then the reduced system
is symplectic-Hamiltonian. If $n-r=3$ and $d\bar{\s_\aa}_J\not=0$, then
in view of Proposition \ref{p:n=3} the reduced system is
symplectic-Hamiltonian with respect to a modified symplectic form.

The last possibility is that $n-r\ge 4$ and $d\bar{\s_\aa}_J\not=0$.
Note that the function $f_J$ inherits from $f$ the property of Fourier
genericity and $d\bar{\s_\aa}_J$ inherits from $d\s_\aa$ the property
of vanishing in a subset of the reduced action space $\bar A_J$ whose
complement is open and dense. Therefore, the reduced system
$(\bar\cA_J\times\toro^{n-r},\bar{\s_\aa}_J,\bar f_J)$ satisfies the
hypotheses of the present Proposition and we may apply to it the
reduction procedure just described. This leads to a family of reduced
systems with at most $n-6$ degrees of freedom, each of which is
either symplectic, or has $3$ degrees of freedom, or has more than 3
degrees of freedom and is almost-symplectic. The reduction procedure
can be applied to the latter, etc. The iteration stops when all
reduced systems obtained either are symplectic-Hamiltonian (in
particular, if they have $0$,$1$ or $2$ degrees of freedom) or have
3 degrees of freedom.
\end{proof}

The same conclusions about the structure of strongly Hamiltonian
functions can be obtained under different hypotheses on these
functions. Let $\Sp f=\{\nu\in\interi^n \,:\;
f_\nu\not=0\} = \cup_{a\in\cA}\Sp{f,a}$. For each $\nu\in\Sp f$, the
set
$$
  \cF_\nu := \{a\in \cA_C : f_\nu(a)\not=0 \}
$$
is open and nonempty, and $\nu\in\ker C(a)$ for all $a\in\cF_\nu$. 
The set
$$
   \cF := \bigcap_{\nu\in\Sp f }\cF_\nu 
$$
need not be open and nonemtpy. However, if it is nonempty, then any
$\nu\in\Sp f$ satisfies $\nu\in\ker C(a)$ for all $a\in\cF$ and hence 
$\nu\in \big(\cap_{a\in\cF}\ker C(a)\big)\cap\interi^n$.
Thus
$$
  \Sp{f} \subseteq 
  \Big(\bigcap_{a\in\cF}\ker C(a)\Big)\bigcap\interi^n \,. 
$$

\begin{proposition}
\label{p:n>3bis}
The conclusion of Proposition \ref{p:n>3} remains true if it is
assumed that $\cA_C$ is dense in $\cA$ and, instead of the Fourier
genericity of $f$, that for each $\nu\in\Sp f$ the set $\cF_\nu$ is
dense in~$\cA$. 
\end{proposition}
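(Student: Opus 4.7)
\emph{Plan.} The discussion preceding the Proposition has already done most of the work: whenever the set $\cF = \bigcap_{\nu \in \Sp f}\cF_\nu$ is nonempty, one has $\Sp f \subseteq \cL' := \big(\bigcap_{a\in\cF}\ker C(a)\big)\cap\interi^n$. So the whole problem reduces to verifying that $\cF$ is nonempty under the present hypotheses, after which one can run the argument of Lemma \ref{l:n>3} and the reduction of Proposition \ref{p:n>3} with $\cL'$ playing the role that $\cL$ played there.

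First I would observe that each $\cF_\nu$ is open in $\cA$: indeed $\cA_C=\{a\in\cA : C(a)\ne 0\}$ is open by continuity of $C$, and $\{a\in\cA_C : f_\nu(a)\ne 0\}$ is open by continuity of the Fourier coefficient $f_\nu$. Under the hypotheses of the Proposition each $\cF_\nu$ is moreover dense in $\cA$.

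Then I would invoke the Baire category theorem. Since $\Sp f\subseteq\interi^n$ is countable and $\cA$ is an open subset of $\renne$, hence a Baire space, the countable intersection $\cF$ of dense open sets is dense in $\cA$; in particular $\cF$ is nonempty and, by construction, $\cF\subseteq\cA_C$. From the discussion in the preamble this immediately yields $\Sp f\subseteq\cL'$.

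At this point the argument follows the proof of Lemma \ref{l:n>3} essentially verbatim. Since $\cF\subseteq\cA_C$, Lemma \ref{l:kerC} gives $\dim\ker C(a)\le n-3$ for every $a\in\cF$, so $\bigcap_{a\in\cF}\ker C(a)$ is a subspace of $\renne$ of dimension $\le n-3$ and $\cL'$ is a sublattice of $\interi^n$ of rank $r\le n-3$. The Elementary Divisors Theorem then produces a basis of $\interi^n$ whose first $r$ elements span $\cL'$, hence a unimodular matrix $Z$ and a change of action-angle coordinates of the form \for{CambioAA} after which $f$ depends on at most $n-3$ angles. The torus-action reduction of Proposition \ref{p:n>3}, possibly iterated as in the last paragraph of that proof, then yields the conclusion. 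The only slightly nontrivial ingredient beyond what has already been proved is the Baire category step, which is standard once one has identified each $\cF_\nu$ as open and dense and $\Sp f$ as countable; this is where I expect the proof's novelty to lie.
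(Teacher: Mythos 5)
Your proposal is correct and follows essentially the same route as the paper's own proof: identify each $\cF_\nu$ as open and dense, apply the Baire category theorem to get $\cF$ dense (hence nonempty) in $\cA$, conclude $\Sp f$ lies in a sublattice of rank $\le n-3$ that is the intersection of $\interi^n$ with a subspace, and then reuse the Elementary Divisor argument of Lemma \ref{l:n>3} and the reduction of Proposition \ref{p:n>3}. (If anything, your version is slightly more careful: you claim only density of $\cF$, whereas a countable intersection of open dense sets need not be open, as the paper's wording might suggest.)
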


\begin{proof}
$\cF$ is a countable intersection of open dense subsets of~$\cA$.
Since $\cA$ is an open subset of $\renne$, a straightforward
application of the Baire category theorem guarantees that $\cF$ is an
open dense subset of $\cA$. The set $\Big(\bigcap_{a\in \cF}\ker
C(a)\Big) \cap \interi^n$ is a sublattice of $\interi^n$ of rank
$r\le n-3$ and is the intersection of $\interi^n$ with a subspace of
$\renne$. We may thus proceed as in the proof of Lemma \ref{l:n>3}
and conclude that there is a system of action-angle coordinates
$(I,J,\p,\psi)$ in which $f$ depends only on the $r\le n-3$ angles
$\psi$. The statements as in Proposition \ref{p:n>3} follow from here.
\end{proof}

Combining the arguments used in the proofs of Lemma \ref{l:n>3} and
Proposition \ref{p:n>3bis}, one easily sees that the conclusions of
Proposition \ref{p:n>3} remain valid if the Fourier-genericity of the
function $f$ is weakened, by assuming that, for each $\bar\nu\in\Sp f$,
for each $a$ in a dense subset of $\cA$ there is a $\nu$ parallel to
$\bar\nu$ such that $f_\nu(a)\not=0$.

In view of Propositions \ref{p:n=3} and \ref{p:n>3}, and at least
under the stated hypotheses, strongly Hamiltonian perturbations $h+\e
f$ of completely integrable almost-symplectic systems reduce to
families of symplectic-Hamiltonian nearly integrable systems 
and can therefore be studied via the standard results and techniques
of Hamiltonian perturbation theory, applied to each reduced
system. We will therefore restrict our study of perturbation theory
for nearly-integrable almost-symplectic systems to the case of
perturbations that are not strongly Hamiltonian. 

We remark that, even though it reduces to a family of
symplectic-Hamiltonian systems, the non-symplectic-Hamiltonian
character of the system is encoded in the evolution of the angles
that have been quotiented out in the reduction process. The
corresponding `reconstruction' equation  is given by the equations of
motion of the angles $\psi$, which using the notation of the proof of
Proposition \ref{p:n>3} is
\beq{reconstruction}
  \dot \psi = \der f J(I,J,\varphi) - B(I,J) \der f\varphi
  (I,J,\varphi) \,,
\eeq
and by the analogous equations at the other stages of the reduction
procedure. 

\vspace{3ex}\noindent
{\it Examples. } Examples of the situation described in this Section
are easily constructed. For instance, the matrix
$
A=\left(
  \begin{matrix} 
     0 &a_4 &0 &0 \cr
     -a_4 &0 &0 &0 \cr
     0 &0 &0 &0 \cr
     0 &0 &0 &0
  \end{matrix}
  \right)
$
leads to an almost-symplectic structure on $\reali^ 4\times \toro^4$
of the form (\ref{eq:sigma}) that is not symplectic. It is immediate
to check that the quantities $C_{ijk}\frac{\partial f}{\alpha_k}$
either are $0$ or, up to the sign, equal one of the three derivatives
$\frac{\partial f}{\alpha_1}$, $\frac{\partial f}{\alpha_1}$,
$\frac{\partial f}{\alpha_3}$. Thus a function is strongly
Hamiltonian function if and only if it is independent of the three
angles $\alpha_1, \alpha_2, \alpha_3$. An example with $n=5$ has
$
A=\left(
  \begin{matrix} 
     0 &a_1a_3 &0 &0 &0 \cr
     -a_1a_3 &0 &0 &0 &0 \cr
     0 &0 &0 &0 &0 \cr
     0 &0 &0 &0 &0 \cr
     0 &0 &0 &0 &0 
  \end{matrix}
  \right)
$;
here too, the strongly Hamiltonian functions are the functions
independent of $\alpha_1,\alpha_2,\alpha_3$; among them,
$\frac{a_4^2}2+a_5-(1+\cos\alpha_5)\cos\alpha_4$ describes a
periodically perturbed pendulum, which is nonintegrable.

\section{Perturbation theory }

\subsection{A first look}
\label{ss:FirstLook}
We begin by investigating the possibility of a perturbation theory
for nearly integrable almost-sympletic Hamiltonian systems, so as to
determine the analogies and the differences from the standard
symplectic-Hamiltonian case. Our treatment, at this initial stage,
will be rather formal. More precise considerations will be made in
subsection~\ref{ss:AS-Nekh}. 

\newcommand\K{X_k}
\newcommand\F{X_f}
\newcommand\X{Z}

We start from the system 
\beq{K+F}
  \K + \e \F \,,
\eeq
with $k=k(a)$ and $f=f(a,\a)$, on the phase space
$\cA\times\tenne\ni(a,\a)$ with $\cA\subseteq\renne$ and
almost-symplectic form $\s_\aa$ as in \for{eq:sigma}. We assume $n\ge
3$. Moreover, we assume $k$ and $f$ to be real analytic,
and $\e$ (suitably) small. 

The equations of motion of system \for{K+F} are
$$
  \dot a  = -\e \der f\a(a,\a) \,,\qquad 
  \dot \a =  \der ka(a) + \e \der fa(a,\a)  + \e A\der f\a(a,\a)  \,.  
$$
The first of these equations gives the apriori estimate
$|a_t-a_0|=\cO(\e t)$ on the variation of the actions over a time
$t$, and hence
$$
  |a_t-a_0| \le \const \, \e^{c_1} 
  \qquad \mathrm{for} \quad 
  |t| \le \const\, \e^{-c_2}
$$
with any pair of positive constants $c_1$ and $c_2$ such that
$c_1+c_2=1$. The goal of perturbation theory is to go beyond this
apriori estimate. 
 
Since the $a$-equation for the vector field \for{K+F} is the same as
that of the symplectic-Hamiltonian case, it can be expected that it
might be possible to build a perturbation theory which is to some
extent similar to the symplectic-Hamiltonian one. The basic step is
to look for the existence of a family of diffeomorphisms $\P_\e$
which depends smoothly on~$\e$ in an interval which contains
zero, equals the identity for $\e\to0$ and is such that
$$
  \P_\e^*(\K+\e \F)  =  \K + \e G + \cO(\e^2)
$$
with a vector field $G$ which is ``as integrable as possible'' or
that, at least,  moves the new actions $a\circ\P^{-1}_\e$ as little
as possible. If this is the case, then the vector field $\K + \e G +
\cO(\e^2)$  will be generically called a normal form.\footnote{The
fact that the remainder is order $\e^2$ is formal: due to the
presence of resonances, the remainder might in fact be $\cO(\e^p)$
with some $1<p<2$, see below. To simplify the exposition, however, in
this Section we adopt this formal point of view.} This procedure
should then be  iterated as many times as possible.  We begin by
looking at the first step.

Preliminarily, we recall that in the symplectic case the
diffeomorphisms $\P_\e$ are constructed so as to be symplectic, and
the normal form is accordingly built for the Hamilton function,
rather than for the Hamiltonian vector field. However, in the
almost-symplectic context there is no analog of a symplectic
transformation, which conjugates Hamiltonian vector fields to
Hamiltonian vector fields while conjugating as well the respective
Hamiltonian functions. Hence, we are forced to work with the
Hamiltonian vector fields.

A standard way of constructing the family of diffeomorphisms $\P_\e$
is through the maps $\P^\X_\e$ at time $\e$ of the flow $\P^\X$ of a
vector field $\X$. This is the so called Lie method, that we will
apply to vector fields, see \cite{fasso-VF} for details. Recalling the
basic identity $\frac d{dt} (\P^\X_t)^* Y = (\P^\X_t)^* (L_\X Y)$
between the pull back of a vector field $Y$ under a flow and the Lie
derivative (here we write $L_\X Y$ for $[\X,Y]$), one immediately
sees that
$$
  \P_\e^* Y \ug Y + \e R_\e^1(Y) \ug Y + \e L_\X Y + \e^2 R_\e^2(Y) 
$$
where, if both $\X$ and $Y$ are real analytic,
$$
 R_\e^1(Y) = \sum_{s=1}^\infty \frac{\e^{s-1}} {s!}L_\X^s Y
 \,,\qquad
 R_\e^2(Y) = \sum_{s=2}^\infty \frac{\e^{s-2}} {s!}L_\X^s Y
$$
with $L^1_\X Y=L_\X Y$ and $L^{s+1}_\X Y=L_\X^s(L_\X Y)$ for $s\ge1$.

Applying the Lie method to \for{K+F}, and observing that
$R_\e^1(X_{\e f}) =  \e R_\e^1(\F)$, gives
$$
  \P_\e^* (\K+\e \F) = 
  \K + \e [\X,\K] + \e \F + \e^2 R_\e^2(\K) + \e^2 R_\e^1(\F) 
$$
and therefore, given that the last two terms are $\cO(\e^2)$,
the vector field $\X$ should be selected so that
\beq{VectHomEq}
  [\X,\K] + \F = G 
\eeq
with some $G$ with the desired properties. Equation \for{VectHomEq}
is the so-called {\it (`vector') homological equation} of
perturbation theory. There are very well known obstructions to the
existence of solutions to this equation, due to the presence of
resonances, and it is well known that, in order to obtain a solution,
the equation has to be modified. 

Before seeing this, we point out that if we look for solutions $\X,G$
of equation \for{VectHomEq} which are Hamiltonian, then we are
essentially in the standard symplectic-Hamiltonian case. To see this,
we first note that, for Hamiltonian vector fields, equation
\for{VectHomEq} reduces to the standard homological equation for the
Hamiltonian functions of the symplectic case:

\renewcommand{\chi}{z}
\begin{lemma} \label{l:SolEqOm} 
If there exist functions $\chi$ and $g$ which satisfy the (`scalar')
homological equation
\beq{ScalarHomEq}
  \{k,\chi\}_\aa + f = g
\eeq
then $\X=X_\chi$ satisfies the (`vector homological')
equation \for{VectHomEq} with $G=X_g$.
\end{lemma}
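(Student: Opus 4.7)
The plan is to evaluate the commutator $[X_\chi, X_k]$ directly by means of Lemma \ref{l:homomorphism}, recognize it as a Hamiltonian vector field, and combine with $X_f$ using the linearity of the assignment $h \mapsto X_h$ to reproduce the vector homological equation \for{VectHomEq}.

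The crucial preliminary point is that $X_k$ is strongly Hamiltonian. Since $k = k(a)$ depends only on the actions, this follows from the remark made immediately after equation \for{eq:CIHVF}: in action--angle coordinates the $3$-form $d\s_\aa$ contains no differential of the angles (see \for{eq:dsigma}), while $X_k$ has vanishing action components by \for{eq:HVF}, so $i_{X_k} d\s_\aa = 0$. In particular $X_k$ is a symmetry of $\s_\aa$, and the pair $(Y, Z) = (X_\chi, X_k)$ satisfies the hypotheses of Lemma \ref{l:homomorphism}; no strong Hamiltonianity of $X_\chi$ is required, which is exactly the level of generality needed, since the generator $\chi$ of the near--identity Lie transformation is only assumed to be a smooth function.

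I would then apply the formula $[X_f,X_g] = -X_{\{f,g\}}$ displayed right after the proof of Lemma \ref{l:homomorphism} (which is valid whenever one of the two factors is a symmetry of $\s_\aa$, and in particular here) to the pair $(X_\chi,X_k)$, and use the antisymmetry of the almost--Poisson bracket \for{eq:aP} together with the linearity $h\mapsto X_h$ (immediate from the nondegeneracy of $\s_\aa$ and the relation $i_{X_h}\s_\aa = -dh$), obtaining $[X_\chi, X_k] = -X_{\{\chi,k\}_\aa} = X_{\{k,\chi\}_\aa}$. Adding $X_f$ and invoking once more the linearity of $h\mapsto X_h$ together with the hypothesis $\{k,\chi\}_\aa + f = g$ yields $[X_\chi, X_k] + X_f = X_{\{k,\chi\}_\aa + f} = X_g$, which is precisely the vector homological equation \for{VectHomEq} with $\X = X_\chi$ and $G = X_g$.

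There is essentially no obstacle here: the statement is a direct dynamical translation of the anti--homomorphism property already recorded after the proof of Lemma \ref{l:homomorphism}. The one substantive point worth stressing is the asymmetric application: because $k$ depends only on the actions, $X_k$ is automatically strongly Hamiltonian, and this is precisely what makes it legitimate to apply Lemma \ref{l:homomorphism} to a pair in which the first factor $X_\chi$ is a generic Hamiltonian vector field. This asymmetry is what keeps the Lie--method construction viable in the general (non--strongly--Hamiltonian) perturbative setting that follows.
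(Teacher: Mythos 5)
Your proof is correct and follows essentially the same route as the paper's: both rest on the observation that $X_k$ is a symmetry of $\s_\aa$ (being strongly Hamiltonian because $k$ depends only on the actions), apply Lemma \ref{l:homomorphism} to the pair $(X_\chi,X_k)$ to identify $[X_\chi,X_k]$ as the Hamiltonian vector field of $\{k,\chi\}_\aa$, and conclude by the linearity of $h\mapsto X_h$ together with the scalar homological equation. Your version merely makes explicit the sign bookkeeping and the reason $X_k$ is a symmetry, which the paper's one-line proof leaves implicit.
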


\begin{proof} Since $\K$ is strongly Hamiltonian and $\X=X_\chi$ is
Hamiltonian, by Lemma \ref{l:homomorphism}
the vector field $[\X,\K]$ is Hamiltonian and equals
$X_{\{k,\chi\}_\aa}$. Therefore 
$[\X,\K]-\F-G=X_{\{k,\chi\}_\aa-f-g}=0$. 
\end{proof}

Let now $\o=\der ka$ be the frequency map of the unperturbed system,
so that $\K=\sum_{j=1}^n\o_j\partial_{\a_j}$. Since the function $k$
depends only on the actions, the $A$-dependent terms in the almost
Poisson brackets $\{k,\chi\}_\aa$ are absent, see \for{eq:APB}.
Therefore, the scalar homological equation \for{ScalarHomEq} reduces
exactly to the standard homological equation of the symplectic case,
namely
$$
  \o \cdot \der\chi\a = g -f \,.
$$
Furthermore, given that the relation between the action-components of
a Hamiltonian vector field and its Hamiltonian function is the same
as in the symplectic case, one realizes that in the almost-symplectic
case the normal form term $g$ can be chosen exactly as in the
symplectic case: namely, as (partial) average of the perturbation
$f$. We will be more precise on this in the next Subsection.

In other words, in the almost-symplectic case that we consider, at
the level of Hamiltonian functions things go exactly as in the
symplectic case. Nevertheless, even in the first normalization step
that we are considering here, there are differences at the level of
the normal form vector fields. A (minor) difference from the standard
symplectic case is that the $\a$-components of $\X$ and $G$ contain
extra $A$-dependent terms. More important, the vector field $\X$ is
Hamiltonian, but need not be strongly Hamiltonian. This has the
consequence that the remainder $\e^2 R_\e^1(\F)+\e^2R_\e^2(\K)$ need
not be Hamiltonian, and the procedure just outlined cannot be
iterated. 

The conclusion of this elementary analysis is that it can be expected
that all results on the variations of the actions that, in the
symplectic case, follow from a single normalization step will retain
their validity in the almost-symplectic case. As we will see, this
includes a `first-order' formulation of Nekhoroshev theorem. However,
all results obtained through iteration of the normal form procedure,
in particular the KAM theorem, will not extend to the
almost-symplectic case, unless the perturbation has special
properties (e.g., it is strongly Hamiltonian, see Section
\ref{ss:4.3}).

\subsection{An almost-symplectic Nekhoroshev-like theorem}
\label{ss:AS-Nekh}

In order to make more definite statements, we need to take into
considerations the role of resonances. This requires the
consideration of Fourier series of Hamiltonian vector fields and of
some properties of their (partial) averages.

From now on, we will write the Fourier series \for{Fourier1} of a 
function on $\cA\times\renne$ as
$$
   y=\sum_{\nu\in\interi^n}y_\nu \,,
$$
where the functions $y_\nu:\cA\to\reali$, that we call the {\it
harmonics} of $y$, are given by $y_\nu(a,\a) = \hat y_\nu(a)
E_\nu(\a)$. Similarly, if $Y$ is a vector field, we will write
$$
   Y=\sum_{\nu\in\interi^n}Y_\nu 
$$
where, for each $\nu$, the {harmonic} $Y_\nu$ is defined as  the
vector field whose components are the $\nu$-th harmonics of the
components of $Y$. Note that if $Y$ is Hamiltonian, with Hamiltonian
function $y$, then, for each $\nu\in\interi^n$, $Y_\nu$ is a
Hamiltonian vector field, with Hamiltonian function $y_\nu$.
Furthermore, for any subset $\L$ of $\interi^n$ we define projectors
$\Pi_\L$ on the spaces of functions and vector fields as
$$
  \Pi_\L y := \sum_{\nu\in\L}y_\nu 
  \,,\qquad
  \Pi_\L Y := \sum_{\nu\in\L}Y_\nu \,.
$$
Clearly, if $Y$ is Hamiltonian with Hamiltonian $y$, then $\Pi_\L Y$
is Hamiltonian with Hamiltonian $\Pi_\L y$.

A point $a\in\cA$ is said to be {resonant} with a vector $\nu \in
\interi^n$ if $\o(a)\cdot \nu=0$. In that case, $\nu$ is called a
{resonance} of $a$ and $|\nu|:=\sum_{i=1}^n|\nu_i|$ its {order}. The
resonances of a point $a$ form a sublattice $\L_a$ of $\interi^n$.
Conversely, given a subset (not necessarily a sublattice) $\L\subseteq
\interi^n$, the $\L$-resonant set is
$$
  \cA_\L:=\{a\in\cA: \o(a)\cdot\nu=0 \mathrm{\ for\ all\ } \nu\in\L\} \,.
$$ 

By expanding all functions in Fourier series, the scalar homological
equation \for{ScalarHomEq} becomes
$\sqrt{-1}\, \o\cdot \nu = g_\nu-f_\nu$ for all $\nu\in\interi^n$.
Hence, at each point $a$, if $\L_a$ denotes as above the set of
resonances of $a$, equation \for{ScalarHomEq} has the solution
$$
  \chi(a,\a) = 
  \sum_{\nu\notin\L_a} \frac{f_\nu(a,\a)} {\sqrt{-1}\,\o(a)\cdot\nu}
  \,,\qquad
  g(a,\a) = \Pi_{\L_a} f (a,\a) \,.
$$
(If $\L\not=\{0\}$ then this solution is not unique, because there is
arbitrariness in the choice of $\chi_\nu$ for $\nu\in\L_a$ and of
$g_\nu$ for $\nu\notin\L_a$; however, this solution is the one which
is usually considered in the symplectic case and there is no reason
here to change it). Due to the resonances, the solution above has
obvious and well known smoothness problems. 

Specifically, if, as we will assume, the Hamiltonian $k$ is such that
the frequency map $\o:\cA\to\renne$ is a local diffeomorphism, which
happens if $k$ satisfies Kolmogorov's nondegeneracy condition
$\det\dder k a a (a) \not=0$ for all $a\in\cA$, then the set of
resonant points is dense in~$\cA$. The way out depends to a certain
extent on the result one is looking for, but for both KAM and
Nekhoroshev theorems it is based on the approximation of the
perturbation by a finite order Fourier truncation
\beq{FourierTruncation}
  f^{\le N}(a,\a) \;:=\; \sum_{\nu\in\interi^n ,\, |\nu|\le N}
  f_\nu(a) \,,
\eeq
so as to have to deal with only a finite number of resonances and
avoid the density of resonances, and on the construction of
(resonant) normal forms in neighbourhoods of the corresponding
resonant sets. The parameter $N$ is called a cutoff and
$\F^{>N}:=\F-\F^{\le N}$ the ultraviolet part of $\F$.  For real analytic
vector fields, $\F^{>N}$ decays with $N$ as $\exp{(-\const \times
N)}$. Thus, suitably choosing $N$ as a function of $\e$ makes
$\F^{>N}$ of order $\e^2$ (or smaller, if needed).

We thus fix a cutoff $N$, a set $\L\subseteq \interi^n_N :=
\{\nu\in\interi^n \,:\; |\nu|\le N\}$ and a subset $\cB_\L$ of $\cA$
whose points possibly resonate with the vectors of $\L$ but do not
resonate with any other vector $\nu\in\interi^n_N\setminus \{\L\}$. 
Since 
\beq{CV-NF}
  \P_\e^* (\K+\e \F) = 
  \K + \e [\X,\K] + \e \F^{\le N}  + \F^{>N} + \e^2 R_\e^2(\K) + \e^2
  R_\e^1(\F)
\eeq
if we take $\X=X_\chi$ in $\cB_\L\times\tenne$ with
\beq{chi-g}
  \chi =  \sum_{\nu\notin\L} \frac{f_\nu} {\sqrt{-1}\, \o\cdot\nu}
  \,,\qquad g = \Pi_{\L} f 
\eeq
we obtain the $\L$-resonant normal form 
\beq{NF}
  \P_\e^* (\K+\e \F) = 
  \K + \e \Pi_\L \F^{\le N} + \cO(\e^2)
\eeq
which is defined in $\cB_\L\times \tenne$. Note that the function
$\chi$ is now a sum of finitely many terms, and all denominators are
nonzero, so there are no smoothness issues.  The usefulness of these
approximate normal forms in the standard symplectic case is due to
the properties of the $a$-components of the averages $\Pi_\L \F$.
These properties are valid in the almost-symplectic case as well:

\begin{lemma}\label{l:averages}
Let $\L$ be a subset of $\interi^n$. Then, if $Y$ is Hamiltonian, the
$a$-component of  $\Pi_\L Y$ is parallel to $\L$.
\end{lemma}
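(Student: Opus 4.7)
The plan is to reduce the claim to a direct inspection of the Fourier expansion of the $a$-component of a Hamiltonian vector field. Since $Y$ is Hamiltonian, write $Y = X_f$ for some function $f$ on $\cA\times\tenne$. The expression \for{eq:HVF} for Hamiltonian vector fields in action-angle coordinates gives
$$
  X_f^a \ug -\der f{\alpha}\,,
$$
so crucially the $a$-component does not involve the matrix $A$ at all, and the analysis proceeds exactly as in the standard symplectic case.

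Next I would expand $f$ in Fourier series as in \for{Fourier1}: $f(a,\alpha)=\sum_{\nu\in\interi^n}\hat f_\nu(a)E_\nu(\alpha)$. Then term-by-term differentiation yields, for each $j=1,\ldots,n$,
$$
  X_f^{a_j}(a,\alpha) \ug
  -\sum_{\nu\in\interi^n}\sqrt{-1}\,\nu_j\,\hat f_\nu(a) E_\nu(\alpha) \,,
$$
so the $\nu$-th harmonic of the vector-valued component $X_f^a$ is precisely
$$
   (X_f^a)_\nu \ug -\sqrt{-1}\,\hat f_\nu(a) E_\nu(\alpha)\,\nu \,,
$$
which is a scalar multiple of $\nu\in\interi^n$.

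Applying the projector $\Pi_\L$ selects exactly those harmonics with $\nu\in\L$, so
$$
  (\Pi_\L X_f)^a(a,\alpha) \ug
  -\sum_{\nu\in\L}\sqrt{-1}\,\hat f_\nu(a) E_\nu(\alpha)\,\nu \,,
$$
and at every point this vector lies in the real (indeed rational) linear span of the vectors $\nu\in\L$. This is exactly what is meant by the $a$-component being parallel to~$\L$, and concludes the argument.

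There is no real obstacle here: the lemma is essentially a one-line Fourier computation, and the only point that might cause a reader to pause is noticing that the $A\der f\alpha$ term in \for{eq:HVF} affects only the $\alpha$-component of $X_f$, not the $a$-component, so that the almost-symplectic correction plays no role in the statement.
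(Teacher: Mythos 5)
Your proposal is correct and follows essentially the same route as the paper's own (one-line) proof: both observe that the $a$-component of a Hamiltonian vector field is $-\partial f/\partial\alpha$, so each harmonic of $X_f^a$ is a multiple of $\nu$, and $\Pi_\L$ keeps only those with $\nu\in\L$. (Only quibble: the parenthetical ``indeed rational'' is not right, since the coefficients $\hat f_\nu(a)E_\nu(\alpha)$ are generally not rational --- what matters, and what you correctly establish, is that the vector lies in the real span of $\L$.)
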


\begin{proof} If $y$ is a Hamiltonian of $Y$ then $\Pi_\L y$ is a 
Hamiltonian of $\Pi_\L Y$ and
$Y_{a}=-\der{}\a \sum_{\nu\in\L} y_\nu 
= \sum_{\nu\in\L} \sqrt{-1}\, \nu y_\nu$.
\end{proof}

{\it Remark: } The statement in Lemma \ref{l:averages} that a
vector field on $\cA$ is parallel to a set $\L\subseteq\renne$ is
meaningful because, just in the standard symplectic case,
the action space $\cA$ has an affine structure \cite{FS1}. 

\vspace{3ex}
Lemma \ref{l:averages} has the consequence that, near a resonance set
$\cA_\L$ where the dynamics is described by the normal form \for{NF},
for times short with respect to $\e^{-2}$ the motion of the
(transformed) actions takes place approximately in an affine subspace
parallel to $\L$ of the action space.  This is the `fast drift'
subspace of Nekhoroshev theory. In the symplectic Hamiltonian case, 
Nekhoroshev theory provides  a mechanisms of confinement of such fast
drift, which requires certain properties of the unperturbed
Hamiltonian. The most general among these properties are the so
called steepness properties (see \cite{GuzzoChierchiaBenettin} for a
recent, refined proof of Nekhoroshev theorem under such general
conditions), but a simple case is provided by convexity, namely
$$ 
 \Big|u\cdot \dder k a a (a)u \Big| 
 \ge \const\, \|u\|^2 \qquad \forall \, u\in\renne \,,\; a\in\cA \,.
$$
Under this hypothesis (or more generally, under the so-called
hypothesis of quasi-convexity, see \cite{poeschel}), in the symplectic
case the confinement of the actions' movement along the fast drift
hyperplane is provided by the conservation of the Hamiltonian.

In our case, the normal form vector field $(\P^\X_\e)^*(\K+\e \F) =
\K+\e\Pi_\L \F+\cO(\e^2)$ is not Hamiltonian. However, the original
system $\K+\e \F$ is Hamiltonian. As consequence, its Hamilton
function $k+\e f$ is a first integral of $\K+\e \F$. In turn,
the function  $(\P^\X_\e)^* (k+\e f)$ is a first integral of  the
vector field $(\P^\X_\e)^*(\K+\e \F)$. This first integral provides
the necessary confinement. 

The only, real difference from the symplectic case is that the
procedure cannot be iterated, and `exponentially long' time scales are
not reached. However, these considerations should make clear that the
following result can be reached instead:

\begin{proposition}\label{Nek-as}
Consider the system of Hamiltonian $k(a)+\e f(a,\a)$ on $\cA\times
\tenne$, equipped with the almost-symplectic structure $\s_\aa$.
Assume that $k$ and $f$ are real analytic and that $k$ is convex.
Then, there exist positive constants $A$, $T$, $c_1$ and $c_2$
independent of $\e$ and with
$$
   1< c_1 + c_2 <2
$$
such that, for $\e$ sufficiently small, all motions $t\mapsto (a_t,\a_t)$
satisfy
\beq{NostreStime}
  \|a_t-a_0\| \le A \e^{c_1}   \quad\mathrm{for}\quad
  |t| \le T \e^{-c_2} \,.
\eeq
\end{proposition}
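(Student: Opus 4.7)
The plan is to carry out a single step of resonant normal form via the Lie method already sketched in Subsection~\ref{ss:FirstLook}, and then extract the confinement bounds exactly as in the quasi-convex Nekhoroshev theorem; the only twist is that the role of the normal-form Hamiltonian as conserved quantity is played here by the pullback of the \emph{original} function $k+\e f$, which is still a first integral of the transformed vector field even though the latter need not be Hamiltonian.

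First I would fix a Fourier cutoff $N=N(\e)$ and a geometric decomposition of the action space $\cA$ into resonant zones $\cB_\L$ indexed by sublattices $\L\subseteq\interi^n_N$, constructed so that in $\cB_\L$ the only $\nu\in\interi^n_N$ with $|\o(a)\cdot\nu|$ anomalously small are those in $\L$. Since the convexity of $k$ makes $\o=\der ka$ a local diffeomorphism, this is the standard geometry of resonances and the construction in \cite{poeschel} transfers verbatim, the $a$-equation of our system being the same as in the symplectic case. On each $\cB_\L\times\tenne$ I would solve the homological equation~\for{chi-g} to produce a generating vector field $X_\chi$ and set $\P_\e=\P^{X_\chi}_\e$, obtaining the resonant normal form \for{NF}. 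Its remainder is controlled by estimating separately the ultraviolet part $\F^{>N}$ (exponentially small in $N$ by analyticity of $f$) and the higher-order Lie tails $\e^2R^1_\e(\F)+\e^2R^2_\e(\K)$, for which Cauchy estimates on $X_\chi$ with loss of analyticity radius suffice; the $A$-dependent contributions in \for{eq:APB} to the $\a$-components of $X_\chi$ are uniformly bounded and thus harmless.

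Next, in each zone I would run the classical quasi-convex analytic argument. By Lemma~\ref{l:averages}, the $a$-component of $\e\Pi_\L \F^{\le N}$ lies in the affine span of $\L$, the fast-drift plane; hence, for as long as the normal form is valid, the transformed actions drift transverse to this plane only under the effect of the remainder. To bound the motion \emph{within} the fast-drift plane I would invoke the pullback $(k+\e f)\circ\P_\e^{-1}$, which is a first integral of $(\P_\e)^*(\K+\e\F)$: on affine subspaces parallel to $\L$ the convexity of $k$ forces $k$ to look like a paraboloid of definite sign, and the standard geometric lemma (e.g.\ \cite{poeschel}) turns conservation of $k+\e f$ into an estimate $\|a_t-a_0\|\lesssim \e^{1/2}$ modulo error terms controlled by the remainder. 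Combining the transverse and parallel bounds and choosing the cutoff $N\sim \e^{-a}$ with $a>0$ small enough that $\F^{>N}$ is negligible while the Cauchy loss remains controlled yields \for{NostreStime} in each zone, hence globally by patching the zones.

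The main difficulty is the bookkeeping of the simultaneous choices of $N$, the analyticity radii, the widths of the resonant zones, and of the exponents $c_1,c_2$, so as to balance remainder size, validity of the resonance decomposition, and confinement; this is exactly the balancing performed in the single-step part of the quasi-convex Nekhoroshev proof and we inherit it. The restriction $c_1+c_2<2$ is structurally forced: because, unlike in the symplectic case, the normalized vector field is not Hamiltonian and the procedure cannot be iterated, we gain only one power of $\e$ in the remainder, and exponentially long Nekhoroshev time scales are out of reach; the inequality $1<c_1+c_2$ simply says that the result improves on the trivial a priori bound $|a_t-a_0|=\cO(\e t)$.
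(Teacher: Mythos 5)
Your proposal follows essentially the same route the paper intends: a single Lie-method resonant normalization with the geometry of resonances and quasi-convex confinement imported from \cite{poeschel}, the key twist being that the pullback $(\P_\e^{X_\chi})^*(k+\e f)$ of the original Hamiltonian supplies the conserved quantity since the normalized vector field is no longer Hamiltonian, and the impossibility of iterating the step is exactly what limits the time scale to $\e^{-c_2}$ with $c_1+c_2<2$. This matches the argument sketched in Subsections~\ref{ss:FirstLook} and~\ref{ss:AS-Nekh}, so no further comparison is needed.
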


We do not give here a proof of this result because, as we have
already mentioned, the proof can be obtained with very minor
modifications of, for instance, the proof given in \cite{poeschel} for
the symplectic case. Specifically, besides some small differences in
the construction and estimate of the normal form vector field (which
is not the Hamiltonian vector field of the normal form Hamiltonian
and thus needs to be treated on its own), the main difference is that
in our case it is not necessary to iterate the construction of the
normal form; this simplification leads to different estimates on the
confinement of motions, which are however easily worked out. 

Following the argument in \cite{poeschel} one obtains, as possible
values of the two constants $c_1$ and $c_2$, for instance, $c_1=
\frac1{8n}$ and $c_2=\frac32(1-\frac1{4n})$. These values improve,
even only slightly, on the apriori estimate $c_1+c_2=1$. It is
possible that better values of these constants, particularly of
$c_2$, might be found by carefully complementing the treatment with
some specificities of the problem at hand. We also note that, since
in the standard Nekhoroshev theory real analyticity is needed only to
obtain an exponentially long time scale (see e.g. \cite{Bou}), the
result of Proposition \ref{Nek-as} remain valid (possibly with worse
values of the constants $c_1$ and $c_2$) for smooth Hamiltonians.
However, we leave these analyses for a possible future work because
the technical arguments involved are rather extraneous to the purpose
and the spirit of the present work.

\vspace{3ex}
{\it Remark: } The values of the constants $c_1$ and $c_2$ 
reported above can be obtained under the additional hypothesis that $\o$ is
uniformly bounded away from $0$ in~$\cA$; if not, slightly worse
values can be found; see \cite{poeschel} for the treatment of this
technical fact in the symplectic case.

\subsection{On the case of strong Hamiltonian  perturbations}
\label{ss:4.3}

If the perturbation $f$ is strongly Hamiltonian then, at least under
the hypotheses considered in Section \ref{ss:SHPerturbations}, it is
possible to study the reduced symplectic-Hamiltonian systems via the
standard techniques of Hamiltonian perturbation theory. Thus KAM and
Nekhoroshev theorem are valid for the reduced systems and can be
lifted to the unreduced system by means of the reconstruction
equation, see equation \for{reconstruction}. Alternatively, however,
one may apply the perturbation technique described in the previous
Sections \ref{ss:FirstLook} and \ref{ss:AS-Nekh} to the unreduced
system. At variance from the case of a perturbation that is only
Hamiltonian, if the perturbation is strongly Hamiltonian, then the
construction of the normal forms can be iterated, and the standard
KAM and Nekhoroshev theorems of the symplectic case may be recovered. 

This is due to the following fact:

\begin{proposition} 
If $f$ is strongly Hamiltonian, then the normal form vector field
\for{CV-NF}, with $\chi$ and $g$ as in \for{chi-g}, is strongly
Hamiltonian.
\end{proposition}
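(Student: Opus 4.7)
I plan to show that the near-identity transformation $\P_\e$ itself is generated by a strongly Hamiltonian vector field, so that the pullback $\P_\e^*$ maps strongly Hamiltonian vector fields to strongly Hamiltonian ones. Since $X_k+\e X_f$ is strongly Hamiltonian---$X_k$ because $k=k(a)$ is completely integrable and hence strongly Hamiltonian, as noted in the Remark at the end of Section \ref{ss:SHCIS}, while $X_f$ is so by hypothesis---the conclusion that $\P_\e^*(X_k+\e X_f)$ is strongly Hamiltonian will then follow directly.

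The main step is to verify that the generating function $\chi$ of \for{chi-g} is strongly Hamiltonian. First I would apply Lemma \ref{l:shf} to the hypothesis on $f$ to obtain that $\hat f_\nu(a)=0$ whenever $\nu\notin\ker C(a)$. Since the $\nu$-th harmonic of $\chi$ is
$$
  \chi_\nu(a,\a) \ug \frac{\hat f_\nu(a)}{\sqrt{-1}\,\o(a)\cdot\nu}\, E_\nu(\a)
  \qquad \text{for } \nu\notin\L
$$
and vanishes otherwise, one has $\Sp{\chi,a}\subseteq\Sp{f,a}\subseteq\interi^n\cap\ker C(a)$ for every $a\in\cA$. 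The Fourier calculation in the proof of Lemma \ref{l:shf} is in fact an equivalence, so this inclusion yields $\sum_k C_{ijk}(a)\,\partial\chi/\partial\a_k(a,\a)=0$ everywhere, that is, the strong Hamiltonianity of $\chi$.

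Finally, I would observe that the time-$\e$ map $\P_\e=\P^{X_\chi}_\e$ of the flow of a strongly Hamiltonian vector field preserves the 2-form $\s_\aa$ and, consequently, preserves strong Hamiltonianity of vector fields: if $Y$ is strongly Hamiltonian with $i_Y\s_\aa=-dy$, then $i_{\P_\e^*Y}\s_\aa=\P_\e^*(i_Y\s_\aa)=-d(\P_\e^*y)$ shows $\P_\e^*Y$ is Hamiltonian, while $L_{\P_\e^*Y}\s_\aa=\P_\e^*(L_Y\s_\aa)=0$ shows it is strongly so. Applied to $Y=X_k+\e X_f$, this yields the statement. The main obstacle---really the entire content of the proof---lies in the middle paragraph: that dividing each harmonic of $f$ by the action-dependent scalar $\o\cdot\nu$ does not spoil strong Hamiltonianity. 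This is immediate once Lemma \ref{l:shf} is read as a characterization of strongly Hamiltonian functions through their Fourier spectra rather than merely as a necessary condition.
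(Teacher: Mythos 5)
Your proposal is correct and rests on the same two key facts as the paper's own proof: that $\chi$ is strongly Hamiltonian because each harmonic $\chi_\nu=\hat f_\nu E_\nu/(\sqrt{-1}\,\o\cdot\nu)$ inherits the condition $\sum_k C_{ijk}\partial f_\nu/\partial\a_k=0$ (the $a$-dependent denominator does not touch the angles), and that the flow of a strongly Hamiltonian field preserves $\s_\aa$ and hence the class of strongly Hamiltonian vector fields. The only difference is packaging: you pull back $X_k+\e X_f$ as a whole, while the paper verifies strong Hamiltonianity term by term in \for{CV-NF} (of $R^1_\e(X_f)$, $R^2_\e(X_k)$, $[Z,X_k]$, $\Pi_\L X_f^{\le N}$), which is what one actually tracks when iterating; your version recovers this by linearity of the strongly Hamiltonian class.
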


\begin{proof} First of all, we note that a function $y$ is strongly
Hamiltonian if and only if its harmonics $y_\nu$ are strongly
Hamiltonian. In fact, $X_y =\sum_\nu (X_y)_\nu = \sum_\nu X_{y_\nu}$
and the vanishing of
$i_{\sum_\nu X_{y_\nu}}d\s= \sum_\nu i_{X_{y_\nu}}d\s$ is equivalent to
the vanishing of each $i_{X_{y_\nu}}d\s$. 

If $\chi$ is as in \for{chi-g} then, for each $\nu$,
$\chi_\nu=\frac{ f_\nu}{\sqrt{-1}\,\o\cdot \nu}$ and 
$$
 \sum_{k=1}^n C_{ijk}\der{\chi_\nu}{\a_k} 
 \ug
 \frac{ 1 }{\sqrt{-1}\,\o\cdot \nu} \, \sum_{k=1}^n C_{ijk}\der{f_\nu}{\a_k} 
$$
which vanishes because $f_\nu$ is strongly Hamiltonian. This proves
that each $\chi_\nu$ is strongly Hamiltonian, see \for{Cderfa=0}, and
hence $\chi$ is strongly Hamiltonian.

Next, we note that {\it if $z$ and $y$ are two strongly Hamiltonian
functions, then, for any $t$, the function $(\P^{X_z}_t)_*y$ is
strongly Hamiltonian and its Hamiltonian vector field is
$(\P^{X_z}_t)_*X_y$}. The proof of this fact is immediate because,
restricted to its strongly Hamiltonian vector fields, an
almost-symplectic structure behaves as a symplectic one. 

It follows that $R_\e^1(\F)= \P_\e^* (\F)-\F$ and, taking into account
Lemma \ref{l:homomorphism} as well,  $R_\e^2(\K)= \P_\e^* (\K)-\K- \e
[\X,\K]$ are strongly Hamiltonian. The proof is concluded by noting
that, on account of what has been noticed above, $\F^{\le N}$ and
$\Pi_\L\F^{\le N}$ are strongly Hamiltonian, too. \end{proof}

\vspace{3ex}\noindent
{\small
{\bf Acknowlegments} We are grateful to Giovanna Carnovale and
Andrea Giacobbe for pointing out the Elementary Divisor Theorem to us
and to Umberto Marconi for some technical remarks on the Baire
Category Theorem.

}

\end{document}